\title[]{On the supercritical Schr\"{o}dinger equation\\
on the exterior of a ball}
\newcommand{\bra}[1]{\langle #1 \rangle}
\newcommand{\one}[1]{\mathbf{1}_{#1}}
\newenvironment{sproof}%
  {\baselineskip0pt\normalfont\footnotesize%
  \abovedisplayskip=1pt\abovedisplayshortskip=1pt%
  \belowdisplayskip=1pt\belowdisplayshortskip=1pt%
  \vskip2pt\par\noindent$\llbracket$\ignorespaces}%
  {\ignorespaces$\rrbracket$\vskip2pt}
\def\⟦{\begin{sproof}\ignorespaces}\def\⟧{\unskip\end{sproof}}
\numberwithin{equation}{section}
\newtheorem{theorem}{Theorem}[section]
\newtheorem{corollary}[theorem]{Corollary}
\newtheorem{proposition}[theorem]{Proposition}
\theoremstyle{remark}
\newtheorem{remark}[theorem]{Remark}
\theoremstyle{definition}
\newtheorem{definition}[theorem]{Definition}
\date{\today}
\author[P.~D'Ancona]{Piero D'Ancona}
\address{Piero D'Ancona: 
Dipartimento di Matematica\\
Sapienza Universit\`{a} di Roma\\
Piazzale A.~Moro 2\\
00185 Roma\\
Italy}
\email{dancona@mat.uniroma1.it}
\thanks{%
}
\subjclass[2020]{%
}
\keywords{%
}
\begin{document}

\begin{abstract}
  We consider the mixed problem on the exterior of the unit
  ball in $\mathbb{R}^{n}$, $n\ge2$,
  for a defocusing Schr\"{o}dinger equation with a power
  nonlinearity $|u|^{p-1}u$, with zero boundary data.
  Assuming that the initial data are non radial,
  sufficiently small
  perturbations of \emph{large} radial initial data,
  we prove that
  for all powers $p>n+6$ the solution
  exists for all times, its Sobolev norms do not inflate,
  and the solution is unique in the energy class.
\end{abstract}

\maketitle



\section{Introduction}\label{sec:intr}

The literature on the
defocusing semilinear Schr\"{o}dinger equation
\begin{equation}\label{eq:NLS}
  iu_{t}+\Delta u=|u|^{p-1}u,
  \qquad
  u(0,x)=u_{0}(x)
\end{equation}
is extensive and we
mention \cite{Cazenave03-a}, \cite{Tao06-a} and
\cite{Dodson19-a} for an
introduction and detailed bibliographies.
Restricting to large $H^{1}$ (energy class) data,
Problem \eqref{eq:NLS} is well posed for
large data below the critical value
$p<p_{0}(n)$ where
$p_{0}(n)=\frac{n+2}{n-2}$ for $n>2$ and $p_{0}(n)=+\infty$
if $n=1,2$ (\cite{GinibreVelo79-b}).
The problem is well posed also in the critical case
$p=p_{0}(n)$ as proved in a series of important papers
(\cite{Bourgain98-a},
\cite{Bourgain99-a},
\cite{Grillakis00-a},
\cite{Tao05-a},
\cite{CollianderKeelStaffilani08-a},
\cite{RyckmanVisan07-a},
\cite{Visan07-a}).
Well or ill posedness
in the \emph{supercritical} case $p>p_{0}(n)$
has been for many years a completely open problem.
A recent breaktrhough was obtained in
\cite{MerleRaphaelRodnianski19-a}, where finite time blow up
was established for a class of large, radially symmetric, 
localized initial data and suitable ranges of $(n,p)$.

Here we consider the supercritical case $p>p_{0}(n)$
from a different perspective. It is not difficult
to check that for radial data the first blow up must
occur at the origin, or, equivalently, that if the
solution remains bounded near the origin then no
blow up can occur. This is an immediate consequence
of the bound, valid for spherically symmetric functions,
\begin{equation}\label{eq:strausslemma}
  |x|^{\frac{n}{2}-1}|u(x)|\lesssim\|\nabla u\|_{L^{2}},
\end{equation}
usually called \emph{Strauss' Lemma}.
Inequality \eqref{eq:strausslemma}
is a special case of the family of inequalities
\begin{equation}\label{eq:nonradialstr}
  \textstyle
  |x|^{\frac np-\sigma}|u(x)|
  \lesssim
  \||D|^{\sigma}u\|_{L^{p}_{|x|}L^{r}_{\omega}},
  \qquad
  \frac{n-1}{r}+\frac1p<\sigma<\frac np
\end{equation}
(see \cite{DAnconaLuca12-a}), where
the norm $L^{p}_{|x|}L^{r}_{\omega}$ is an $L^{p}$ norm
in the radial direction of the $L^{r}_{\omega}$
norm on spheres centered at 0.

Exploiting the previous remark, one can remove the singularity,
by considering the mixed problem
\begin{equation}\label{eq:defocintro}
  iu_{t}+\Delta u=|u|^{p-1}u,
  \qquad
  u(0,x)=u_{0}(x),
  \qquad
  u(t,\cdot)\vert_{\partial \Omega}=0
\end{equation}
on the exterior of the unit ball
\begin{equation*}
  \Omega=\{x\in\mathbb{R}^{n}:|x|>1\}.
\end{equation*}
One obtains that for radial initial data the solution
must exist for all times and all values of $p,n$.
The precise statement is the following:

\begin{proposition}[]\label{pro:globalrad}
  Let $\Omega=\mathbb{R}^{n}\setminus \overline{B(0,1)}$, 
  $n\ge2$, $p>1$ and let
  $u_{0}\in H^{1}_{0}\cap H^{2}(\Omega)$
  be radially symmetric.
  Then the mixed problem \eqref{eq:defocintro}
  has a global solution 
  $u\in C^{2}L^{2}\cap C^{1}H^{1}_{0}\cap CH^{2}$,
  satisfying the conservation of mass
  $\|u(t)\|_{L^{2}}=\|u_{0}\|_{L^{2}}$ and of energy
  \begin{equation}\label{eq:conservs}
    \textstyle
    E(u(t)):=
    \frac12
    \int_{\Omega}|\nabla_{x} u(t)|^{2}dx
    +
    \frac{1}{p+1}
    \int_{\Omega}|u(t)|^{p+1}dx=E(u_{0})
  \end{equation}
  and the uniform bound
  \begin{equation}\label{eq:unibound}
    \|u\|_{L^{\infty}(\mathbb{R}^{+}\times \Omega)}
    \lesssim
    \|u_{0}\|_{H^{1}}.
  \end{equation}
  If $v\in C^{2}L^{2}\cap C^{1}H^{1}_{0}\cap CH^{2}$ is
  a second solution of \eqref{eq:defocintro}
  with the same data, which is radially symmetric 
  or, more generally, bounded on any strip 
  $0\le t\le T$, then $v \equiv u$.

  Assume in addition $p>2N$ for some integer $N\ge1$
  and $(u_{0},f)$ with $f(z)=|z|^{p-1}z$
  satisfy the nonlinear compatibility conditions of order $N$.
  Then
  $u\in C^{N}L^{2}(\Omega)$ and
  $u\in C^{k}(H^{2(N-k)}(\Omega)\cap H^{1}_{0}(\Omega))$ 
  for $0\le k\le N-1$.
\end{proposition}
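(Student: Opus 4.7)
The plan is a local-to-global argument in which the Dirichlet geometry and radial symmetry combine, via Strauss' lemma~\eqref{eq:strausslemma}, to furnish an a priori $L^{\infty}$ bound that tames the supercritical nonlinearity. First, the Dirichlet Laplacian on $\Omega$ is self-adjoint on $L^{2}(\Omega)$ with domain $H^{2}\cap H^{1}_{0}$ and generates a rotation-invariant unitary group $e^{it\Delta}$. Because $f(z)=|z|^{p-1}z$ is not directly Lipschitz on $H^{2}\cap H^{1}_{0}$ when $p$ is large, I would first truncate: taking $f_{R}(z):=\chi(|z|/R)f(z)$ with $\chi\in C^{\infty}_{c}$ equal to $1$ near $0$, $f_{R}$ is globally Lipschitz on $H^{2}\cap H^{1}_{0}$, so the Duhamel formulation of the truncated problem admits a unique global solution $u_{R}\in C^{2}L^{2}\cap C^{1}H^{1}_{0}\cap CH^{2}$ by contraction; radial data yield radial $u_{R}$ by the invariance of the construction.

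The crucial point is to show the truncation is inert once $R$ is large enough. Pairing the equation with $\bar u_{R}$ and with $\partial_{t}\bar u_{R}$ and taking the appropriate parts gives conservation of mass and of the truncated energy, the boundary contributions vanishing thanks to $u_{R}|_{\partial\Omega}=0$ (and hence $\partial_{t}u_{R}|_{\partial\Omega}=0$). Extending $u_{R}(t)$ by zero to $\mathbb R^{n}$ yields a radial $H^{1}(\mathbb R^{n})$ function supported in $\{|x|\ge1\}$; Strauss' lemma~\eqref{eq:strausslemma} (or its $H^{1}$-based variant for $n=2$) then gives
\[
\|u_{R}(t)\|_{L^{\infty}(\Omega)}\lesssim\|u_{R}(t)\|_{H^{1}(\Omega)}\lesssim\|u_{0}\|_{H^{1}},
\]
uniformly in $t$. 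Picking $R$ above this constant forces $f_{R}(u_{R})=f(u_{R})$, so $u_{R}$ solves \eqref{eq:defocintro} globally and satisfies \eqref{eq:conservs} and \eqref{eq:unibound}. Boundedness of $\|u(t)\|_{H^{2}}$ then follows by differentiating the equation once in time: an energy identity for $u_{t}$ (again using $u_{t}|_{\partial\Omega}=0$) yields $\partial_{t}\|u_{t}\|_{L^{2}}^{2}\lesssim\|u\|_{L^{\infty}}^{p-1}\|u_{t}\|_{L^{2}}^{2}$, Gronwall bounds $\|u_{t}\|_{L^{2}}$ on any finite interval, and $\|\Delta u\|_{L^{2}}$ is read off algebraically from $\Delta u=-iu_{t}+|u|^{p-1}u$.

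Uniqueness is a standard Gronwall estimate for bounded solutions (and a radial second solution is automatically bounded on strips by the same extension plus Strauss): $w=u-v$ satisfies a linear Schr\"odinger equation with potential bounded by $\|u\|_{L^{\infty}}^{p-1}+\|v\|_{L^{\infty}}^{p-1}$, so the $L^{2}$ energy inequality forces $w\equiv 0$. For the higher-regularity statement, the assumption $p>2N$ guarantees that $N$ time derivatives of $f(u)$ make sense; the equations for $v_{k}:=\partial_{t}^{k}u$, $0\le k\le N$, are linear Schr\"odinger systems whose coefficients are polynomials in lower-order $v_{j}$ multiplied by powers of $|u|$ kept bounded by the $L^{\infty}$ estimate, and the compatibility conditions place the formally computed initial data $v_{k}(0)$ in the correct Dirichlet space. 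Induction on $k$ combined with elliptic regularity applied to $\Delta u=-iu_{t}+|u|^{p-1}u$ propagates the regularity in space and delivers the claimed $C^{k}(H^{2(N-k)}\cap H^{1}_{0})$ membership.

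The principal obstacle, and the raison d'\^etre of the Dirichlet/radial setup, is that for $p>p_{0}(n)$ no Strichartz-based fixed point is available for large $H^{1}$ data, and blow-up does occur on $\mathbb R^{n}$ by \cite{MerleRaphaelRodnianski19-a}. Every step above rests on the Strauss $L^{\infty}$ bound, which requires both the Dirichlet condition — keeping $\Omega$ separated from the origin — and the radial symmetry of the solution; drop either and the plan fails.
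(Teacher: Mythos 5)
Your plan is correct and rests on the same pillars as the paper's own proof: the a priori $L^{\infty}$ bound obtained from Strauss' lemma \eqref{eq:strausslemma} combined with conservation of mass and energy, propagation of $H^{2}$ regularity via a Gronwall estimate on $\|u_{t}\|_{L^{2}}$ followed by reading $\Delta u=-iu_{t}+|u|^{p-1}u$ off the equation, uniqueness by Gronwall in the class of bounded solutions, and higher regularity by induction on time derivatives plus elliptic regularity. The only structural difference is in the local theory: you truncate the nonlinearity and then show the truncation is never active, whereas the paper runs the contraction directly in a metric space $Y_{T}$ of \emph{radial} $C^{1}L^{2}\cap CH^{2}$ functions, where \eqref{eq:strausslemma} makes every iterate bounded; the two devices are interchangeable here. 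One intermediate claim should be repaired: $f_{R}$ is \emph{not} globally Lipschitz from $H^{2}\cap H^{1}_{0}$ to $H^{2}$ (the chain-rule term $f_{R}''(u)|\nabla u|^{2}$ is quadratic in $\|u\|_{H^{2}}$, and for $1<p<2$ the second derivative of $|z|^{p-1}z$ is not even bounded near $z=0$), so a single contraction does not deliver a \emph{global} $H^{2}$ solution of the truncated problem. This costs you nothing: local $H^{2}$ existence for the truncated equation together with exactly the continuation ingredients you already describe (the uniform $L^{\infty}$ bound from \eqref{eq:conservs} and \eqref{eq:strausslemma}, Gronwall for $u_{t}$, elliptic recovery of $\Delta u$) closes the argument, which is precisely how the paper proceeds for the untruncated equation.
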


The proof of Proposition \ref{pro:globalrad} is given in
Section \ref{sub:exisrad};
see Definition \ref{def:NLcc} for the meaning of
the nonlinear compatibility conditions.
In principle, one can not expect that the
radial solution thus constructed is unique; there might
exist other non radial solutions with the same data,
since the problem is supercritical.
Uniqueness is discussed in detail below.

Once a global radial solution is available, a natural
question concerns the stability for non radial perturbations
of the initial data. In view of the blow up
result mentioned above, one may expect that for large
non radial data the solution blows up also for the
supercritical exterior problem. However, 
using a pseudoconformal transform argument,
one verifies that radial solutions decay as $t\to+\infty$,
and the decay is good enough to work out a perturbative argument:

\begin{proposition}[Decay of the radial solution]
  \label{pro:decayrad}
  Let $u$ be the solution constructed in 
  Proposition \ref{pro:globalrad} for
  a radially symmetric
  $u_{0}\in H^{1}_{0}\cap H^{2}(\Omega)$.
  Assume in addition that $xu_{0}\in L^{2}(\Omega)$.
  Then, for all $t>0$, $|x|\ge1$,
  $u$ satisfies the decay estimate
  \begin{equation}\label{eq:decayrad}
    |u(t,x)|\le
    C(\|u_{0}\|_{H^{1}}+\|xu_{0}\|_{L^{2}})
    \cdot
    \bra{t}^{-1}|x|^{1-\frac n2}.
  \end{equation}
\end{proposition}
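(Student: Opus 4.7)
The plan is to exploit a pseudoconformal-type monotonicity on $\Omega$ combined with the radial Strauss embedding \eqref{eq:strausslemma}. Setting $J := x + 2it\nabla$ and introducing the modulated profile $v(t,x) := e^{-i|x|^{2}/(4t)}u(t,x)$, one has the identity $Ju = 2it\, e^{i|x|^{2}/(4t)}\nabla v$, hence $\|\nabla v(t)\|_{L^{2}(\Omega)} = \|Ju(t)\|_{L^{2}(\Omega)}/(2t)$. Since $v$ inherits the radial symmetry of $u$ (the phase is radial) and vanishes on $\partial\Omega$, its zero-extension to $\mathbb{R}^{n}$ belongs to radial $H^{1}$; so once a uniform bound on $\|Ju(t)\|_{L^{2}}$ is available, the pointwise decay of $u$ follows from \eqref{eq:strausslemma} applied to this extension.

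To bound $\|Ju(t)\|_{L^{2}}$ I would study the pseudoconformal functional
\[
\mathcal{E}(t) := \|Ju(t)\|_{L^{2}}^{2} + \frac{8 t^{2}}{p+1}\|u(t)\|_{L^{p+1}}^{p+1},
\]
which, after expanding and using the conservation of energy from Proposition~\ref{pro:globalrad}, can be rewritten as $\mathcal{E}(t) = \|xu(t)\|_{L^{2}}^{2} + 8 t^{2}\, E(u_{0}) - 4t\, D(t)$, with the dilation momentum $D(t) := \operatorname{Im}\int_{\Omega}\bar u\,(x\cdot\nabla u)\,dx$. Differentiating this expression and combining the virial identity $\tfrac{d}{dt}\|xu\|_{L^{2}}^{2} = 4D(t)$ (boundary-free thanks to $u|_{\partial\Omega}=0$) with the Pohozaev-type computation of $D'(t)$ on $\Omega$ -- in which, since $u|_{\partial\Omega}=0$ forces $\nabla u$ to be purely normal on $\partial\Omega$, the only surviving boundary term is $-\int_{\partial\Omega}(x\cdot\nu)|\partial_\nu u|^{2}\,dS$ -- and then cancelling the $\|\nabla u\|_{L^{2}}^{2}$ contributions, I expect to reach the monotonicity formula
\[
\mathcal{E}'(t) \;=\; \frac{4t\,[4 - n(p-1)]}{p+1}\,\|u(t)\|_{L^{p+1}}^{p+1} \;-\; 4t\,|\mathbb{S}^{n-1}|\,|u_{r}(t,1)|^{2}.
\]
Since on $\partial\Omega$ the outward normal is $\nu = -x/|x|$, one has $x\cdot\nu = -1$ on the unit sphere, producing the favorable sign of the boundary term. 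In the range $p \ge 1 + 4/n$ -- automatic in the paper's regime of interest $p > p_{0}(n)$ -- the bracket $4 - n(p-1)$ is also non-positive, so $\mathcal{E}'(t)\le 0$ for $t>0$. Hence $\mathcal{E}(t)\le\mathcal{E}(0)=\|xu_{0}\|_{L^{2}}^{2}$, and in particular $\|Ju(t)\|_{L^{2}} \le \|xu_{0}\|_{L^{2}}$ for every $t>0$.

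With the $\|Ju\|_{L^{2}}$ bound in hand, the proof is completed by a short-time/long-time split. For $t\ge 1$, applying \eqref{eq:strausslemma} to the zero-extension of $v(t,\cdot)$ and using $|v|=|u|$ gives $|x|^{n/2-1}|u(t,x)|\lesssim \|\nabla v(t)\|_{L^{2}} = \|Ju(t)\|_{L^{2}}/(2t)\le \|xu_{0}\|_{L^{2}}/(2t)\lesssim \langle t\rangle^{-1}\|xu_{0}\|_{L^{2}}$. For $0<t<1$, where $\langle t\rangle^{-1}\sim 1$, I apply \eqref{eq:strausslemma} to $u(t,\cdot)$ directly and use energy conservation to bound $\|\nabla u(t)\|_{L^{2}}\lesssim \|u_{0}\|_{H^{1}}$, yielding $|x|^{n/2-1}|u(t,x)|\lesssim \|u_{0}\|_{H^{1}}\lesssim \langle t\rangle^{-1}\|u_{0}\|_{H^{1}}$. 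The main obstacle is the derivation of the monotonicity formula for $\mathcal{E}'(t)$: the integration by parts eliminating the several boundary contributions from $\int|x|^{2}\bar u\,\Delta u$ and $\int\bar u\,(x\cdot\nabla\Delta u)$ requires careful bookkeeping, and the favorable sign of the remaining boundary term relies essentially on $\Omega$ being the \emph{exterior} of the ball -- for the interior problem the analogous term would have bad sign and the monotonicity would fail.
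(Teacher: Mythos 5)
Your proposal is correct, and it rests on the same two pillars as the paper's proof: a pseudoconformal monotonicity adapted to the exterior domain, followed by Strauss' Lemma \eqref{eq:strausslemma} applied to a modulated/rescaled profile whose gradient is $(2t)^{-1}Ju$. The difference is in how the monotonicity is obtained. The paper performs the explicit pseudoconformal change of variables $u(t,x)=t^{-n/2}U(-\tfrac1t,\tfrac xt)e^{i|x|^{2}/4t}$, which maps $\{t\ge1,\ |x|\ge1\}$ to the region $\{-1\le T<0,\ |X|\ge -T\}$, and then runs an energy--flux computation across the \emph{moving} boundary $R=-T$, where the Dirichlet condition (via $U_T=U_R$ on the cone) produces the good sign; it then needs an extra step to control $\mathcal{E}(-1)$ by the data at $t=0$. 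You instead stay in the original variables, work with the standard Ginibre--Velo functional $\|Ju\|_{L^2}^{2}+\tfrac{8t^{2}}{p+1}\|u\|_{L^{p+1}}^{p+1}$, and derive $\mathcal{E}'(t)\le0$ from the virial and Pohozaev identities on the \emph{fixed} domain, the favourable boundary term $-4t\int_{\partial\Omega}(x\cdot\nu)_{-}|\partial_\nu u|^{2}$ coming from star-shapedness ($x\cdot\nu=-1$ on the unit sphere); I checked your claimed identity for $\mathcal{E}'(t)$ and it is exactly right. Your route yields the cleaner global bound $\|Ju(t)\|_{L^2}\le\|xu_0\|_{L^2}$ for all $t>0$ in one stroke, at the price of a slightly heavier integration-by-parts bookkeeping (which, as in the paper, is formal and should strictly be justified by regularization/propagation of the weighted norm $xu\in CL^2$). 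Note also that both arguments share the same implicit restriction $p\ge 1+\tfrac4n$ (in the paper this is the condition $\nu\ge0$), which is not stated in the proposition but is harmless in the supercritical regime where it is ultimately used.
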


Proposition \ref{pro:decayrad} is proved in
Section \ref{sub:decay}.
If $u_{0}$ is smoother (and $xu_{0}\in L^{2}$)
then regularity propagates, and Sobolev norms do not
inflate but remain bounded for all times
(see Corollary \ref{cor:regul} in
Section \ref{sub:non_inflation}).

Since the radial solution decays, a perturbative argument
is sufficient to obtain the following global existence
result, proved in Section \ref{sec:main}, which
is the main result of the paper.
Consider again the problem on
$\mathbb{R}^{+}\times \Omega$
\begin{equation}\label{eq:pertpb}
  iv_{t}+\Delta v=f(v),
  \qquad
  v(0,x)=v_{0},
  \qquad
  v(t,\cdot)\vert_{\partial \Omega}=0.
\end{equation}
Then we have:

\begin{theorem}[]\label{the:main}
  Let $n\ge 2$, $p>n+6$, 
  and $u_{0}\in H^{2m}(\Omega)$
  with $m=\lfloor \frac n2\rfloor+1$ a radial
  function such that $(u_{0},f)$ with $f(z)=|z|^{p-1}z$
  satisfy the compatibility conditions of order $m$.
  Assume in addition that $xu_{0}\in L^{2}(\Omega)$.
  Then there exists 
  $\epsilon=\epsilon(u_{0})>0$ such that the following holds. 

  If $v_{0}\in H^{2m}(\Omega)$, with $(v_{0},f)$ satisfying
  the nonlinear compatibility conditions of order $N$,
  and $\|u_{0}-v_{0}\|_{H^{2m}}< \epsilon$,
  then Problem \eqref{eq:pertpb} has a global
  solution
  $v\in C^{m}L^{2}(\Omega)\cap
     C^{k}(H^{2(m-k)}(\Omega)\cap H^{1}_{0}(\Omega))$ 
  for $0\le k\le m-1$.
\end{theorem}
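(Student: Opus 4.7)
The plan is to treat $v$ as a perturbation of the global radial solution $u$ furnished by Proposition \ref{pro:globalrad}, reducing the analysis to a difference equation. Setting $w := v - u$, the increment satisfies
\begin{equation*}
iw_{t}+\Delta w = F(u,w), \qquad w(0)=v_{0}-u_{0}, \qquad w|_{\partial\Omega}=0,
\end{equation*}
with $F(u,w):=f(u+w)-f(u)$ obeying the pointwise bound $|F(u,w)|\lesssim (|u|^{p-1}+|w|^{p-1})|w|$. Two properties of $u$ are decisive: the pointwise decay $|u(t,x)|\lesssim\bra{t}^{-1}|x|^{1-n/2}$ from Proposition \ref{pro:decayrad} (yielding $\|u(t)\|_{L^{\infty}(\Omega)}\lesssim\bra{t}^{-1}$ since $|x|\ge 1$ and $n\ge 2$), and the uniform non-inflation $\|u(t)\|_{H^{2m}}\le M$ from Corollary \ref{cor:regul}. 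Observe that $m=\lfloor n/2\rfloor+1$ is chosen so that $H^{2m}\hookrightarrow L^{\infty}$, while $p>n+6$ forces $p>2m$ with a safe margin, making $f(z)=|z|^{p-1}z$ smooth enough for Moser/Kato--Ponce tame estimates to be available at the $H^{2m}$ level.

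Local existence of $v$ in the stated regularity class is obtained by repeating the Galerkin/energy construction of Proposition \ref{pro:globalrad}, starting from $v_{0}$ satisfying the compatibility conditions of order $m$, and comes with the usual continuation criterion $\|v(t)\|_{H^{2m}}\to\infty$ at the maximal existence time. The task therefore reduces to showing that $\|w(t)\|_{H^{2m}}$ stays bounded whenever $\|w(0)\|_{H^{2m}}<\epsilon$ is sufficiently small. A standard $H^{2m}$ energy identity---using the equivalence between $H^{2m}\cap H^{1}_{0}$ and the graph norm of powers of the Dirichlet Laplacian, together with Moser estimates applied to $F(u,w)$---yields a differential inequality of the type
\begin{equation*}
\frac{d}{dt}\|w(t)\|_{H^{2m}}\ \lesssim\ \bra{t}^{-(p-2)}\|w(t)\|_{H^{2m}} \ +\ \|w(t)\|_{H^{2m}}^{p},
\end{equation*}
where the first term collects contributions linear (and at most quadratic) in $w$, in which at least one power of $u$ can be placed in $L^{\infty}$ with its decay, and the second absorbs the self-interaction $|w|^{p}$.

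The first term is harmless: since $p-2>n+4>1$, the weight $\bra{t}^{-(p-2)}$ is integrable on $[0,\infty)$, and Gronwall bounds its contribution by a fixed multiple of $\|w(0)\|_{H^{2m}}$. The principal obstacle is the self-interaction $\|w\|_{H^{2m}}^{p}$: a pointwise-in-time estimate alone only produces a lifespan $T\sim\epsilon^{-(p-1)}$ and does not close the bootstrap globally. To overcome this one must supplement the $L^{\infty}_{t}H^{2m}$ bound with an auxiliary space-time norm encoding uniform-in-$t$ smallness of $w$---either Strichartz norms for the Dirichlet Laplacian on the exterior domain $\Omega$, or weighted mixed-radial norms built on the non-radial Strauss-type estimate \eqref{eq:nonradialstr}. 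The margin $p>n+6$ beyond the Moser threshold $p>2m$ is exactly what is needed so that, after distributing derivatives via Kato--Ponce and applying H\"older in time, $\int_{0}^{T}\|w\|_{H^{2m}}^{p}\,ds$ is controlled by a positive power of this auxiliary norm, which stays small by a continuity argument. Once the resulting global a priori bound is in hand, the blow-up alternative is ruled out and $v$ extends to all times in the claimed regularity class.
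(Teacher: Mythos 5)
Your overall strategy -- perturbing around the global radial solution, using its pointwise decay from Proposition \ref{pro:decayrad} and the non-inflation bound of Corollary \ref{cor:regul}, and recognizing that a pointwise-in-time energy inequality with the term $\|w\|_{H^{2m}}^{p}$ cannot close globally without an auxiliary space--time norm -- is exactly the paper's strategy, and your diagnosis of where the difficulty sits is accurate. However, the proof stops precisely at the point where the actual work begins, and what you leave as a gesture (``controlled by a positive power of this auxiliary norm, which stays small by a continuity argument'') is the entire content of Section \ref{sec:main}. Concretely, two ingredients are missing. First, the paper does not treat all of $f(u+w)-f(u)$ as a source: it splits off the part linear in $w$ as a potential, writing $iw_{t}+\Delta w-V(t,x)w=w^{2}F[u,w]$ with $V=p|u|^{p-1}$, and proves \emph{perturbed} energy and Strichartz estimates (Theorem \ref{the:enerV}, Proposition \ref{pro:strichV}) with constants uniform in $T$, which requires showing $\|V\|_{X^{1,\infty;m}}<\infty$. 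This step is not cosmetic: a Gronwall argument controls $\|w\|_{L^{\infty}_{t}H^{2m}}$ but not the Strichartz norm $\|w\|_{X^{q,r;m}_{T}}$, which is an integral space--time quantity, so the linear term must be absorbed into the linear flow rather than handled by your differential inequality. Second, the closing estimate is not of the form $\int_{0}^{T}\|w\|_{H^{2m}}^{p}\,ds\lesssim(\text{aux.\ norm})^{\theta}$ as you suggest; rather one estimates $\|w^{2}F[u,w]\|_{X^{1,2;m}_{T}}$ directly by a Leibniz expansion, placing the two distinguished $w$-factors in $L^{q}_{t}W^{N_{r},r}$ and pairing the surviving powers of $u$ (decaying like $\bra{t}^{-(p-2-\nu)}$ with $\nu\le 2m$ derivatives spent) against $L^{q/(q-2)}_{t}$ by H\"older. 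It is exactly this bookkeeping that produces the threshold $p>2m+q+2$ and hence, letting $q\downarrow 2$, the hypothesis $p>n+6$; your assertion that this margin is ``exactly what is needed'' is not derived anywhere in your argument.

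A smaller quantitative point: the decay rate $\bra{t}^{-(p-2)}$ you attach to the linear-in-$w$ contribution is too optimistic at the $H^{2m}$ level. Once $2m$ derivatives are distributed over the factors of $u$ in $|u|^{p-1}w$, only $p-1-\nu$ undifferentiated powers of $u$ remain ($\nu\le 2m$), and the paper bounds the differentiated factors merely by $\|u\|_{X^{\infty,\infty;m}}$; the resulting decay is $\bra{t}^{-(p-1-2m)}$, which is still integrable under $p>n+6$ but shows that even the ``harmless'' term requires the condition $p>2m+2$ rather than just $p>3$.
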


In the proof of Theorem \ref{the:main} we use
a Strichartz estimate for the exterior problem in
presence of an integrable potential. This is proved
in Section \ref{sec:stri_for_line} as a consequence
of the Strichartz estimates on the exterior of convex
obstacles proved in \cite{Ivanovici10a}.

It remains to consider the problem of uniqueness. 
A well known strategy allows to establish uniqueness
of energy class solutions to dispersive equations
under the assumption that a smooth solution exists
(weak--strong uniqueness).
This is particularly convenient in the present situation
since Theorem \ref{the:main} yields a smooth solution
to \eqref{eq:pertpb}. By adapting the arguments in 
\cite{Struwe06} we get:

\begin{theorem}[]\label{the:uniqueness}
  Suppose all the assumptions in Theorem \ref{the:main}
  are satisfied and let $v$ be the solution constructed
  there.
  Let $I$ be an open interval containing
  $[0,T]$ and
  $\underline{v}\in C(I;H^{1}_{0}(\Omega))\cap 
    C^{1}(I;L^{2}(\Omega))$
  a distributional solution for $0\le t\le T$
  to Problem \eqref{eq:pertpb} with the same initial data
  as $v$,
  which satisfies an energy inequality 
  $E(\underline{v}(t))\le E(\underline{v}(0))$
  (see \eqref{eq:conservs}). 
  Then we have $\underline{v}(t)=v(t)$ for $0\le t\le T$.
\end{theorem}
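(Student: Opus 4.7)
Following the relative-energy scheme of Struwe (relevant because in the supercritical regime the naive $L^2$-difference and Strichartz-difference estimates both fail to close), set $w:=\underline v-v$; then $w$ satisfies $iw_t+\Delta w=N:=f(\underline v)-f(v)$ distributionally, with $w(0)=0$ and $w|_{\partial\Omega}=0$. Define the Bregman relative energy
\[
\Phi(t) := E(\underline v(t)) - E(v(t)) - \operatorname{Re}\int_\Omega\bigl(-\Delta v + f(v)\bigr)(t)\,\overline{w(t)}\,dx,
\]
which by Green's identity equals $\tfrac12\|\nabla w(t)\|_{L^2}^2 + \int_\Omega R(v,w)\,dx$ with Bregman remainder $R(v,w):=F(v+w)-F(v)-\operatorname{Re}(f(v)\bar w)$, $F(z)=|z|^{p+1}/(p+1)$. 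The defocusing convexity yields $R(v,w)\gtrsim(|v|+|w|)^{p-1}|w|^2$; hence $\Phi\ge 0$, $\Phi(0)=0$, and $\int|v|^{p-1}|w|^2+\int|w|^{p+1}\lesssim\Phi$. Using the classical equation $iv_t=-\Delta v+f(v)$ (available because $v$ is smooth by Theorem~\ref{the:main}), the cross term rewrites as $-\operatorname{Im}\int v_t\bar w\,dx$, so $\Phi(t)=[E(\underline v(t))-E(v(t))]+\operatorname{Im}\int v_t\,\overline{w}\,dx$.

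Next I exploit the Hamiltonian form $u_t=-i\nabla E(u)$, with $-i$ skew and $D^2E$ self-adjoint. A direct symplectic computation shows that the $D^2E$ contributions cancel, yielding the key identity
\[
\tfrac{d}{dt}\bigl[\nabla E(v)\cdot w\bigr] = -\bigl\langle v_t,\,\operatorname{HOT}(v,w)\bigr\rangle,\qquad \operatorname{HOT}(v,w):=f(\underline v)-f(v)-f'(v)\!\cdot\!w,
\]
in which only the second-order Taylor remainder of the nonlinearity survives. Combined with the energy inequality for $\underline v$, the energy conservation for $v$, and $\Phi(0)=0$, this gives $\Phi'(t)\le\langle v_t,\operatorname{HOT}\rangle(t)$, and hence $\Phi(t)\le\int_0^t\langle v_s,\operatorname{HOT}(s)\rangle\,ds$.

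The estimate closes by a Gronwall argument on $\Psi(t):=\Phi(t)+\|w(t)\|_{L^2}^2$. The smoothness of $v$ from Theorem~\ref{the:main} yields $v,v_t,\nabla v\in L^\infty_{t,x}$ on $[0,T]$ via Sobolev embedding ($2(m-1)>n/2$ for $m=\lfloor n/2\rfloor+1$), and the pointwise bound $|\operatorname{HOT}|\lesssim(|v|^{p-2}+|w|^{p-2})|w|^2$ reduces matters to estimating $\int|v|^{p-2}|w|^2+\int|w|^p$. Splitting each integral along $\{|v|\le 1\}\cup\{|v|>1\}$ (resp.\ $\{|w|\le 1\}\cup\{|w|>1\}$), and using $|v|^{p-2}\le|v|^{p-1}$ on $|v|>1$, $|w|^p\le|w|^2$ on $|w|\le 1$, and $|w|^p\le|w|^{p+1}$ on $|w|>1$ (since $p\ge 2$), each piece is dominated by either $\|w\|_{L^2}^2$ or by the Bregman coercivity, hence by $C\Psi$. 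Simultaneously, $\tfrac{d}{dt}\|w\|_{L^2}^2=-2\operatorname{Im}\int N\bar w$ with $|N|\lesssim(|v|+|w|)^{p-1}|w|$ gives directly $\bigl|\tfrac{d}{dt}\|w\|_{L^2}^2\bigr|\lesssim\Phi\lesssim\Psi$. Combining, $\Psi(t)\le C\int_0^t\Psi(s)\,ds$ with $\Psi(0)=0$; Gronwall forces $\Psi\equiv 0$ on $[0,T]$, so $\underline v\equiv v$.

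The main technical obstacle is justifying the Hamiltonian cancellation identity rigorously, given that $\underline v$ is only a distributional solution with $\underline v_t\in C([0,T];L^2)$. This is handled by mollifying $\underline v$ in time, establishing the identity for the smooth approximants, and passing to the limit using the continuity $\underline v\in C(I;H^1_0)\cap C^1(I;L^2)$: every dangerous pairing involves $\underline v_t$ tested against a quantity derived from the smooth $v$, so $L^2$-convergence is enough, and the Bregman coercivity provides the equi-integrability needed for the nonlinear remainder. The supercriticality $p>n+6$ does not hinder the Gronwall closure directly; it enters only through the standing requirement $v\in L^{p-1}$, guaranteed by the decay of Proposition~\ref{pro:decayrad}.
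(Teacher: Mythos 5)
Your proposal is correct and follows essentially the same route as the paper: the ``Bregman relative energy'' $\Phi$ is exactly the paper's term $A(t)$ in its splitting $E(\underline v)=E(v)+A+B$, your convexity lower bound is the paper's estimate via $\phi(t)=|t|^{(p+1)/2}/(p+1)$, and your ``Hamiltonian cancellation'' identity (with only the second-order Taylor remainder of $f$ surviving, justified by regularization in time) is precisely the paper's identity \eqref{eq:distrib} applied with the cutoffs $\chi_k$, after which both arguments close by the same Gronwall step on $E(w)+\|w\|_{L^2}^2$.
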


It is clear that a similar strategy can be applied to other 
dispersive equations. Indeed, for the supercritical nonlinear
wave equation on the exterior of a ball,
we proved the global well posedness for quasi radial initial 
data in the companion paper \cite{DAncona19-a}.
For wave equations the pseudoconformal transform is not available,
however the decay of radial solutions can be proved using
the Penrose transform. 

The plan of the paper is the following.
In Section \ref{sec:stri_for_line} we recall the linear
theory for exterior problems and we prove energy and
Strichartz estimates for derivatives of solutions, also
for Schr\"{o}dinger equations perturbed by a
well behaved potential $V(t,x)$.
Section \ref{sec:nonl_theo} is devoted to the general
nonlinear theory and local existence results.
In Section \ref{sec:radial} the global radial solution
is studied in detail. The main result,
Theorem \ref{the:main}, is proved in Section \ref{sec:main},
and the weak--strong uniqueness is proved in the
final Section \ref{sec:weak_stro_uniq}.

\section{The linear exterior problem}\label{sec:stri_for_line}

Consider the linear mixed problem
with $(t,x)\in\mathbb{R}\times \Omega$
\begin{equation}\label{eq:linear}
  i\partial_{t}u+\Delta u=F(t,x)
  \qquad
  u(0,x)=u_{0}(x),
  \qquad
  u(t,\cdot)\vert_{\partial \Omega}=0.
\end{equation}
Denote by $\Delta_{D}$ or simply $\Delta$
the selfadjoint Dirichlet Laplacian on $\Omega$ with domain
$H^{1}_{0}(\Omega)\cap H^{2}(\Omega)$,
by $\Lambda$ its 
(nonnegative selfadjoint) square root
\begin{equation*}
  \Lambda=(-\Delta_{D})^{1/2},
  \qquad
  D(\Lambda)=H^{1}_{0}(\Omega)
\end{equation*}
and by $e^{it \Delta}$ the flow defined via
the spectral theorem.
Note that $\Lambda^{2k}=(-\Delta)^{k}$ for integer $k\ge0$,
and 
\begin{equation*}
  D(\Lambda^{2k})=D(\Delta^{k})=
  \{f\in
  H^{2k}(\Omega):
  f,\Delta f,\dots ,\Delta^{k-1}f\in H^{1}_{0}(\Omega)
  \}, 
\end{equation*}
\begin{equation*}
  D(\Lambda^{2k+1})=
  \{f\in
  H^{2k+1}(\Omega):
  f,\Delta f,\dots ,\Delta^{k}f\in H^{1}_{0}(\Omega)
  \}
\end{equation*}
that is to say
\begin{equation*}
  \textstyle
  D(\Lambda^{k})=
  \{f\in H^{k}(\Omega) \colon \Delta^{j}f\in H^{1}_{0}(\Omega),
  \ 0\le 2j\le k-1\}.
\end{equation*}
Then for all data $u_{0}\in L^{2}(\Omega)$,
$F\in L^{1}_{loc}(\mathbb{R};L^{2}(\Omega))$ 
there exists a unique solution
$u(t,x)\in C(\mathbb{R};L^{2}(\Omega))$, which can
be written in the form
\begin{equation*}
  \textstyle
  u(t,x)=e^{it \Delta}u_{0}+
  i\int_{0}^{t}e^{i(t-s)\Delta}F(s,\cdot)dx.
\end{equation*}

To formulate estimates of $u$ we need some notations.
Given an interval $I \subseteq \mathbb{R}$, a Banach
space $X$ of functions on $\Omega$ and $T>0$, we shall write
\begin{equation*}
  L^{p}_{I}X=L^{p}(I;X),
  \qquad
  L^{p}_{T}X=L_{[0,T]}X,
  \qquad
  L^{p}X=L_{[0,+\infty)}X
\end{equation*}
with the obvious norms. Moreover we shall write
for $N\in \mathbb{N}$
\begin{equation*}
  \textstyle
  \|u\|_{X^{q,r;N}_{I}}=
  \sum_{j=0}^{N}\|\partial^{N-j}_{t}u\|_{L^{p}_{I}W^{2j,q}}
  =
  \sum_{2k+|\alpha|\le 2N}\|
    \partial_{t}^{k}\partial^{\alpha}_{x}u\|_{L^{p}_{I}L^{q}}
\end{equation*}
where $W^{k,q}=W^{k,q}(\Omega)$ is the usual Sobolev space 
with norm
$\sum_{|\alpha|\le k} \|\partial^{\alpha}u\|_{L^{q}}$
and $H^{k}=W^{{2,k}}$.
Note that the order of spatial regularity of functions
in $X^{q,r;N}_{I}$ is $2N$. When
$I=[0,T]$ or $I=[0,+\infty)$ we use the notations
\begin{equation*}
  X_{T}^{q,r;N}=X_{[0,T]}^{q,r;N}
  \qquad
  X^{q,r;N}=X_{[0,+\infty)}^{q,r;N}.
\end{equation*}

From the integral representation of $u$ and the unitarity of the group we have
\begin{equation}\label{eq:L2est}
  \|u\|_{L^{\infty}_{I}L^{2}}\le
  \|u_{0}\|_{L^{2}}+\|F\|_{L^{1}_{I}L^{2}}
\end{equation}
for any interval $I$ containing 0.
Higher regularity estimates require compatibility conditions.
Given the data $(u_{0},F)$ we define
recursively a sequence of functions $h_{j}$ as follows:
\begin{equation}\label{eq:lincc}
  h_{0}=u_{0},\qquad
  h_{j}(x)=\partial_{t}^{j}u(0,x)=i^{-1}
  (\partial^{j-1}_{t}F(0,x)-\Delta h_{j-1}(x))
  \qquad
  j\ge1.
\end{equation}
An explicit computation gives
$h_{j}=(-i \Delta)^{j}u_{0}-i\sum_{\ell=0}^{j-1}
  (-i \Delta)^{j-\ell-1}\partial^{\ell}_{t}F(0,x)$.
\begin{definition}[Linear compatibility conditions]\label{def:lcc}
  We say that the data $(u_{0},F)$ satisfy the
  \emph{linear compatibility conditions of order $N\ge1$} if
  $u_{0}\in H^{2N}(\Omega)\cap H^{1}_{0}(\Omega)$,
  $F\in C^{k}H^{2(N-k-1)}(\Omega)\cap C^{N}L^{2}(\Omega)$ 
  for $0\le k\le N-1$, and
  \begin{equation}\label{eq:Lcc}
    h_{j}\in H^{1}_{0}(\Omega)
    \quad\text{for}\quad 
    0\le j\le N-1.
  \end{equation}
\end{definition}

Then one has the following standard result.

\begin{theorem}[]\label{the:linearthm}
  Assume $(u_{0},F)$ satisfy the
  linear compatibility conditions of order $N$ for some $N\ge1$.
  Then the global solution $u$ to Problem \eqref{eq:linear}
  satisfies $u\in C^{N}L^{2}(\Omega)$ and
  $u\in C^{k}(H^{2(N-k)}(\Omega)\cap H^{1}_{0}(\Omega))$ 
  for $0\le k<N$.
  Moreover, for any interval $I \subseteq \mathbb{R}$
  containing 0 and of length $\gtrsim1$ we have the estimate
  \begin{equation}\label{eq:estfree}
    \|u\|_{X^{\infty,2;N}_{I}}\le C(N)
    \left[
      \|u_{0}\|_{H^{2N}}+\|F\|_{X^{1,2;N}_{I}}
    \right].
  \end{equation}
\end{theorem}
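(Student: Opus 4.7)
The plan is to differentiate the equation $j$ times in time for $0 \le j \le N$ and exploit the fact that, formally, $w_j := \partial_t^j u$ solves the same Dirichlet mixed problem with source $\partial_t^j F$ and initial datum $h_j$ from \eqref{eq:lincc}. The strategy is to reduce the full estimate \eqref{eq:estfree} to the basic $L^2$ bound \eqref{eq:L2est} applied at the top level $j=N$, and to recover the spatial regularity at the lower levels $j<N$ via elliptic regularity applied to the identity $\Delta w_j = \partial_t^j F - i\,w_{j+1}$. The linear compatibility conditions \eqref{eq:Lcc} are precisely what is needed to make this formal procedure rigorous: the explicit expression for $h_j$ as a polynomial in $\Delta$ applied to $u_0$ and to $\partial_t^\ell F(0)$, combined with the regularity hypotheses, places $h_j$ in $H^{2(N-j)} \cap H^1_0$ for $0 \le j \le N-1$ and $h_N$ in $L^2$.

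First I would construct $w_N$ via the unitary group on $L^2$, obtaining $w_N \in C(I;L^2)$ together with
\begin{equation*}
  \|w_N\|_{L^\infty_I L^2} \le \|h_N\|_{L^2} + \|\partial_t^N F\|_{L^1_I L^2},
\end{equation*}
both terms being controlled by $\|u_0\|_{H^{2N}} + \|F\|_{X^{1,2;N}_I}$ thanks to the explicit formula for $h_N$ and the definition of the $X$-norm. Then I would run a downward induction on $j$: assuming $w_{j+1}$ has been produced in the expected regularity class, define $w_j$ via the Duhamel formula with initial datum $h_j$ and source $\partial_t^j F$. Since $h_j \in D(\Lambda) = H^1_0$ and $e^{it\Delta}$ is unitary on every $D(\Lambda^k)$, one obtains $w_j \in C(I;H^1_0)$ immediately, and differentiating the Duhamel formula in time confirms $\partial_t w_j = w_{j+1}$ in the strong sense. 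Then I would bootstrap spatial regularity from
\begin{equation*}
  \Delta w_j = \partial_t^j F - i\, w_{j+1},
\end{equation*}
whose right-hand side lies in $C(I;H^{2(N-j-1)})$ by the induction hypothesis and the assumption on $F$; the standard exterior-domain elliptic shift estimate $\|v\|_{H^{2k}(\Omega)} \lesssim \|v\|_{L^2} + \|\Delta^k v\|_{L^2}$, valid for $v \in D(\Lambda^{2k})$, then upgrades $w_j$ to $C(I;H^{2(N-j)}\cap H^1_0)$.

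The main technical obstacle is the rigorous justification that the formally differentiated problems for $w_j$ are solved in the strong sense with the correct nested boundary behavior, namely that all intermediate traces $\Delta^\ell w_j\vert_{\partial\Omega}$ vanish for the appropriate range of $\ell$. This is precisely what the compatibility conditions \eqref{eq:Lcc} are designed to encode, and is best handled by running the entire induction inside the Hilbert scale $D(\Lambda^k)$, on which $e^{it\Delta}$ is a unitary group and hence propagates the nested boundary vanishing in time. Once this is in place, continuity in time at each level follows from the strong continuity of the semigroup on the relevant domain together with continuity of the Duhamel integral in the source, and the estimate \eqref{eq:estfree} assembles by summing the bounds at each level $j=0,\dots,N$; the mild restriction $|I|\gtrsim 1$ is used only to pass between $L^\infty_I$ and $L^1_I$ norms of intermediate quantities with uniform constants.
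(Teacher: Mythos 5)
Your proposal is correct and follows essentially the same route as the paper, which simply invokes the classical argument (differentiate in time, recover spatial regularity inductively via elliptic regularity, cf.\ Tsutsumi) and only adds the remark that $\|G\|_{L^{\infty}_{I}L^{2}}\lesssim\|G\|_{L^{1}_{I}L^{2}}+\|\partial_{t}G\|_{L^{1}_{I}L^{2}}$ for $|I|\gtrsim1$ converts the usual right-hand side into the $X^{1,2;N}_{I}$ norm of $F$. Your downward induction on $j$ with the identity $\Delta w_{j}=\partial_{t}^{j}F-iw_{j+1}$ and the use of the compatibility conditions to place each $h_{j}$ in $H^{2(N-j)}\cap H^{1}_{0}$ is exactly that classical scheme, spelled out in more detail than the paper gives.
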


\begin{proof}
  The result is classical (see e.g.~\cite{Tsutsumi83-b})
  and is proved by differentiating the equation w.r.to time
  and estimating spatial derivatives inductively.
  We depart from standard results only in the
  formulation of the energy estimate
  \eqref{eq:estfree}: the right
  hand side is usually expressed in the form
  \begin{equation*}
    \textstyle
    \|F\|_{X^{\infty,2;N-1}_{I}}+
    \|\partial_{t}^{N}F\|_{L^{1}_{I}L^{2}},
  \end{equation*}
  which can be estimated by the $X^{1,2;N}_{I}$ norm 
  of $F$ since
  \begin{equation*}
    \|G\|_{L^{\infty}_{I}L^{2}}\lesssim
    \|G\|_{L^{1}_{I}L^{2}}+
    \|\partial_{t}G\|_{L^{1}_{I}L^{2}}.
    \qedhere
  \end{equation*}
\end{proof}

Consider next the equation with a time dependent potential
$V(t,x)$
\begin{equation}\label{eq:linearV}
  i\partial_{t}u+\Delta u=V(t,x)u+F(t,x),
  \qquad
  u(t_{0},x)=f(x),
  \qquad
  u(t,\cdot)\vert_{\partial \Omega}=0.
\end{equation}
If we assume that
for some interval $I$ containing 0 (possibly $I=\mathbb{R}$)
\begin{equation*}
  u_{0}\in L^{2},
  \qquad
  F\in C_{I}L^{2},
  \qquad
  V\in L^{1}_{I}L^{\infty},
\end{equation*}
then the existence of a unique solution
$u\in C_{I}L^{2}$ is proved by a simple 
contraction argument
for the map $v \mapsto u$, where $u$ is defined as the
solution to
\begin{equation*}
  i\partial_{t}u+\Delta u=V(t,x)v+F(t,x),
  \qquad
  u(t_{0},x)=f(x),
  \qquad
  u(t,\cdot)\vert_{\partial \Omega}=0,
\end{equation*}
followed by a continuation argument.
The solution satisfies the estimate
\begin{equation}\label{eq:L2Vest}
  \|u\|_{L^{\infty}_{I}L^{2}}\le
  C(\|V\|_{L^{1}_{I}L^{\infty}})
  \left[
    \|u_{0}\|_{L^{2}}+\|F\|_{L^{1}_{I}L^{2}}
  \right].
\end{equation}
Note that it is not necessary to modify the compatibility
conditions in the higher regularity case. Indeed, the
conditions should be
\begin{equation*}
  \textstyle
  h_{j}=
  i^{-1}
  (\partial^{j-1}_{t}F(0,x)+
  \sum_{k=0}^{j-1}\binom {j-1}{k}
  \partial_{t}^{j-1-k}V(0,x)h_{k}
  -\Delta h_{j-1}(x)),
  \qquad
  j\ge1
\end{equation*}
but if the potential $V$ is sufficiently smooth, the
term $\partial_{t}^{j-1-k}V(0,x)h_{k}$ belongs to
$H^{1}_{0}$ by the recursive assumption and can be
omitted.

We denote the solution of \eqref{eq:linearV},
with initial data at $t=t_{0}$, by
\begin{equation*}
  u(t,x)=S(t;t_{0})f.
\end{equation*}
Note that $S(t;t_{0})$ fails to be a group since
the potential $V$ depends on time.
Regarding the
term $Vu$ as a forcing term and applying Duhamel's formula
we can write $S(t;t_{0})$ as a perturbation of the free flow:
\begin{equation}\label{eq:duham}
  \textstyle
  S(t;t_{0})=
  e^{i(t-t_{0})\Delta}-
  i\int_{t_{0}}^{t}e^{i(t-s)\Delta}V(s,x)S(s,t_{0})ds.
\end{equation}
We impose a rather restrictive condition on $V$ in order
to obtain a uniform energy estimate for all times:
\begin{equation}\label{eq:Vass}
  \textstyle
  \|V\|_{X^{1,\infty;N}_{I}}=
  \sum_{j=0}^{N}\|\partial_{t}^{N-j}V\|
  _{L^{1}_{I}W^{2j,\infty}}
  <\infty.
\end{equation}
Note that by standard embeddings we have
\begin{equation*}
  \textstyle
  \|V\|_{X^{1,\infty;N}_{I}}\lesssim
  \|V\|_{X^{1,2;N+N_{2}}_{I}},
  \qquad
  N_{2}=\lfloor \frac n2 \rfloor+1.
\end{equation*}
By the usual recursive argument 
(repeated differentiation with respect to time)
we obtain the regularity result:

\begin{theorem}[Perturbed energy estimate]\label{the:enerV}
  Assume $(u_{0},F)$ satisfy the compatibility
  conditions of order $N$ for some integer $N\ge1$.
  Then problem  \eqref{eq:linearV} has a unique global
  solution, which
  satisfies $u\in C^{N}L^{2}(\Omega)$ and
  $u\in C^{k}(H^{2(N-k)}(\Omega)\cap H^{1}_{0}(\Omega))$ 
  for $0\le k<N$.
  Moreover, for any interval $I \subseteq \mathbb{R}$
  containing 0 
  \begin{equation}\label{eq:estfreeV}
    \|u\|_{X^{\infty,2; N}_{I}}\le 
    C(N,\|V\|_{X^{1,\infty;N}_{I}})
    \left[
      \|u_{0}\|_{H^{2N}}+\|F\|_{X^{1,2;N}_{I}}
    \right].
  \end{equation}
\end{theorem}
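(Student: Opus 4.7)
The plan is to mirror the inductive scheme used for Theorem \ref{the:linearthm}, handling the potential term $Vu$ by repeated time-differentiation and the Leibniz rule. The three ingredients are the basic $L^2$ estimate \eqref{eq:L2Vest}, the hypothesis \eqref{eq:Vass} on $V$, and elliptic regularity for the Dirichlet Laplacian on the smooth exterior domain $\Omega$. Existence, uniqueness and the $L^2$ bound \eqref{eq:L2Vest} are already supplied by the contraction argument sketched before the statement, so only the higher regularity and its estimate need to be produced.

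For the regularity, set $w_k := \partial_t^k u$ and differentiate \eqref{eq:linearV} $k$ times in time to obtain
\begin{equation*}
i\partial_t w_k + \Delta w_k = V w_k + G_k,
\qquad
G_k := \sum_{j=0}^{k-1}\binom{k}{j}(\partial_t^{k-j}V)\,w_j + \partial_t^k F,
\end{equation*}
with $w_k(0,\cdot)$ equal to the recursively defined $h_k$ plus corrections lying in $H^1_0$, as discussed after \eqref{eq:Vass}. I would first run an upward induction on $k$ from $0$ to $N$: applying \eqref{eq:L2Vest} to $w_k$ and estimating each summand in $G_k$ by
\begin{equation*}
\|(\partial_t^{k-j}V)\, w_j\|_{L^1_I L^2}
\le \|\partial_t^{k-j}V\|_{L^1_I L^\infty}\,\|w_j\|_{L^\infty_I L^2}
\end{equation*}
yields the bound on $\|w_k\|_{L^\infty_I L^2}$ using the previous induction step; the $L^2$ norms of the $h_k$ are controlled via \eqref{eq:lincc} by $\|u_0\|_{H^{2N}}$ together with the quantities $\|\partial_t^\ell F(0,\cdot)\|_{L^2}$, which are in turn absorbed into $\|F\|_{X^{1,2;N}_I}$ via the embedding $\|g(0)\|_{L^2}\lesssim\|g\|_{L^1_I L^2}+\|\partial_t g\|_{L^1_I L^2}$ already exploited in the proof of Theorem \ref{the:linearthm}. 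I would then perform a downward induction on $k$ from $N-1$ to $0$ by reading the same equation as a Poisson problem
\begin{equation*}
-\Delta w_k = iw_{k+1} - Vw_k - \sum_{j=0}^{k-1}\binom{k}{j}(\partial_t^{k-j}V)\, w_j + \partial_t^k F
\end{equation*}
with homogeneous Dirichlet data (inherited recursively from the compatibility conditions): assuming $w_{k+1}\in L^\infty_I H^{2(N-k-1)}$ and, from earlier descent steps, $w_j\in L^\infty_I H^{2(N-j)}$ for $j<k$, the hypothesis \eqref{eq:Vass} places each term on the right-hand side in $L^\infty_I H^{2(N-k-1)}$, and elliptic regularity on $\Omega$ then promotes $w_k$ to $L^\infty_I H^{2(N-k)}$. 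Continuity in $t$ with values in the stated Sobolev spaces is obtained by smooth approximation of the data $(u_0,F)$ and passage to the limit in the uniform estimates.

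The main obstacle is the bookkeeping required to make the two nested inductions close simultaneously: each product $(\partial_t^{k-j}V)w_j$ must fall into the space dictated by the target regularity at the current step of descent. This is precisely the reason the norm \eqref{eq:Vass} pairs $N-j$ time derivatives of $V$ with $W^{2j,\infty}$ spatial regularity, and why $V$ is required to live in $X^{1,\infty;N}_I$ rather than in a weaker mixed norm. Once the index-matching is arranged, assembling the final bound \eqref{eq:estfreeV} reduces to summing the contributions from the two inductions via H\"older and the Leibniz rule, with the constant depending only on $N$ and $\|V\|_{X^{1,\infty;N}_I}$.
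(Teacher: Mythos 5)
Your proposal is correct and follows essentially the same route as the paper, which at this point simply invokes ``the usual recursive argument (repeated differentiation with respect to time)'' built on the contraction/continuation existence argument, the $L^{2}$ bound \eqref{eq:L2Vest}, the hypothesis \eqref{eq:Vass}, and elliptic regularity; your two nested inductions are exactly the content of that recursion, and your observation that the pairing of $N-j$ time derivatives of $V$ with $W^{2j,\infty}$ in \eqref{eq:Vass} is what makes the indices match is the right one. The only points to tidy are cosmetic: in the Poisson reformulation the term $\partial_t^k F$ should carry a minus sign, and in the downward induction the previously treated derivatives are those with index $j>k$, the self-referential term $Vw_k$ being absorbed by gaining two spatial derivatives per pass of the descent rather than all at once.
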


We next consider the decay properties of the linear solution.
Strichartz estimates for the exterior problem
are available from \cite{Ivanovici10a}.
Recall that a couple of indices $(q,r)$
is \emph{admissible} if $(q,r)\in [2,\infty]\times[2,\infty)$ 
and it satisfies the scaling condition
\begin{equation*}
  \frac 2q+\frac nr=\frac n2.
\end{equation*}
The \emph{endpoint} is the couple $(2,\frac{2n}{n-2})$;
the restriction $r<\infty$
means that the endpoint is not admissible when $n=2$.
Note that in the following result
$\Omega$ could be more generally
the exterior of any smooth convex obstacle in $\mathbb{R}^{n}$.

\begin{theorem}\label{the:strichext}
  Let $n\ge2$, $(q,r)$ a non endpoint
  admissible couple, and $I \subseteq \mathbb{R}$ an interval
  containing 0. 
  Assume $(u_{0},F)$ satisfy the
  linear compatibility conditions of order $m$ for some $m\ge1$.
  Then the solution $u$ to Problem \eqref{eq:linear} satisfies
  the Strichartz estimate
  \begin{equation}\label{eq:strich}
    \|u\|_{X^{q,r;m}_{I}}\lesssim\|u_{0}\|_{H^{2m}}+
    \|F\|_{X^{1,2;m}_{I}}.
  \end{equation}
\end{theorem}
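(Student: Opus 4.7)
My plan is to reduce to the base case $m=0$, which is exactly the exterior Strichartz estimate of \cite{Ivanovici10a}: for a solution of \eqref{eq:linear} and any non-endpoint admissible $(q,r)$,
\begin{equation*}
  \|u\|_{L^q_I L^r}\lesssim \|u_0\|_{L^2}+\|F\|_{L^1_I L^2}.
\end{equation*}
The higher-regularity estimate \eqref{eq:strich} is then obtained by applying this base estimate to the auxiliary functions
\begin{equation*}
  w_{j,k}:=(-\Delta)^j \partial_t^k u, \qquad j+k\le m.
\end{equation*}
Since $\partial_t^k$ commutes with the spatial mixed problem and $(-\Delta)^j=\Lambda^{2j}$ commutes with the flow $e^{it\Delta}$ by functional calculus, and since the compatibility conditions of order $m$ together with the regularity afforded by Theorem \ref{the:linearthm} place $h_k$ inside $D(\Lambda^{2(m-k)})$, the function $w_{j,k}$ is itself a $C_I L^2$ solution of \eqref{eq:linear} with Dirichlet data $(-\Delta)^j h_k$ and forcing $(-\Delta)^j\partial_t^k F$.

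Ivanovici's estimate therefore yields
\begin{equation*}
  \|w_{j,k}\|_{L^q_I L^r}\lesssim \|\Delta^j h_k\|_{L^2}+\|\Delta^j\partial_t^k F\|_{L^1_I L^2}.
\end{equation*}
The forcing norm is directly bounded by $\|F\|_{X^{1,2;m}_I}$ since $2j+2k\le 2m$. For the data norm I would use the explicit recursive expression for $h_k$ in \eqref{eq:lincc} together with a time-trace inequality
\begin{equation*}
  \|\partial_t^\ell F(0,\cdot)\|_{H^{2s}}\lesssim \|\partial_t^\ell F\|_{L^1_I H^{2s}}+\|\partial_t^{\ell+1}F\|_{L^1_I H^{2s}}
\end{equation*}
(valid when $|I|\gtrsim 1$, exactly as in the proof of Theorem \ref{the:linearthm}) to conclude $\|\Delta^j h_k\|_{L^2}\lesssim \|u_0\|_{H^{2m}}+\|F\|_{X^{1,2;m}_I}$.

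To recover a generic mixed derivative $\partial_t^k\partial^\alpha_x u$ with $2k+|\alpha|\le 2m$, I would then invoke elliptic regularity for the iterated Dirichlet Laplacian on the smooth exterior domain $\Omega$,
\begin{equation*}
  \|v\|_{W^{2j,r}(\Omega)}\lesssim \|(-\Delta)^j v\|_{L^r(\Omega)}+\|v\|_{L^r(\Omega)},
\end{equation*}
applied slicewise in $t$ to $v=\partial_t^k u$ with $j=\lceil|\alpha|/2\rceil\le m-k$. Odd-order spatial derivatives are absorbed by the inclusion $W^{2j,r}\subset W^{2j-1,r}$. Summing the resulting bounds over the pairs $(j,k)$ produces \eqref{eq:strich}.

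The main obstacle is the bookkeeping verifying that $w_{j,k}$ genuinely is the Dirichlet solution of the asserted mixed problem, i.e.~that $(-\Delta)^j h_k$ lies in $L^2$ and that $w_{j,k}(t,\cdot)$ inherits the Dirichlet boundary condition for all $t$; this is a direct translation of the compatibility conditions of order $m$ through the recursion \eqref{eq:lincc} and the regularity statement of Theorem \ref{the:linearthm}. Once this verification is in place, the remaining steps are routine, and the whole argument runs in close parallel with the proof of the perturbed energy estimate of Theorem \ref{the:enerV}.
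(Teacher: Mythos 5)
There is a genuine gap at the central step of your argument: for $j\ge1$ the function $w_{j,k}=(-\Delta)^{j}\partial_{t}^{k}u$ is \emph{not} a solution of the Dirichlet mixed problem \eqref{eq:linear}, so Ivanovici's estimate cannot be applied to it. The operator that commutes with the flow $e^{it\Delta}$ is the functional-calculus power $\Lambda^{2j}=(-\Delta_{D})^{j}$, and it agrees with the differential operator $(-\Delta)^{j}$ only on $D(\Delta_{D}^{j})$, i.e.\ when $v,\Delta v,\dots,\Delta^{j-1}v\in H^{1}_{0}$. From the equation one has $\Delta\partial_{t}^{k}u=\partial_{t}^{k}F-i\partial_{t}^{k+1}u$; since $\partial_{t}^{k+1}u(t,\cdot)\in H^{1}_{0}$ but $F(t,\cdot)$ is not assumed to vanish on $\partial\Omega$, the trace of $\Delta\partial_{t}^{k}u$ on the boundary equals that of $\partial_{t}^{k}F$ and is generically nonzero. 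The linear compatibility conditions of Definition \ref{def:lcc} only constrain the traces $h_{j}$ at $t=0$; they do not place $F(t,\cdot)$ in $H^{1}_{0}$ for $t>0$, nor do they give $\Delta h_{k}\in H^{1}_{0}$ (which would require $\partial_{t}^{k}F(0,\cdot)\in H^{1}_{0}$), so your claim that $h_{k}\in D(\Lambda^{2(m-k)})$ also fails for $m-k\ge2$. In short, $(-\Delta)^{j}$ does not preserve the Dirichlet boundary condition, and the same Duhamel term you would need to commute it through is exactly where the obstruction sits.

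This is precisely the difficulty the paper's proof is built to avoid: it commutes only with $\partial_{t}$ (which \emph{does} preserve the boundary condition, since $\partial_{t}^{k}u(t,\cdot)\in H^{1}_{0}$ by Theorem \ref{the:linearthm}), obtains $\|u_{t}\|_{L^{q}L^{r}}$ from the $m=0$ case, and then recovers spatial regularity from the identity $\Delta u=F-iu_{t}$ rather than by commuting $\Delta$ through the flow. The price is the extra term $\|F\|_{L^{q}L^{r}}$ in \eqref{eq:estDu}, which must itself be controlled by $\|F\|_{X^{1,2;1}}$; this is done by interpolating the embeddings $X^{1,2;1}\hookrightarrow L^{1}H^{2}$ and $X^{1,2;1}\hookrightarrow L^{\infty}L^{2}$ to reach $L^{2}H^{1}\hookrightarrow L^{2}L^{\frac{2n}{n-2}}$ (with $H^{1}\hookrightarrow BMO$ when $n=2$), and then $L^{q}$ elliptic regularity converts the bound on $\Delta u$ into one on $u$ in $W^{2,r}$. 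That embedding--interpolation step is entirely absent from your proposal, and it is the ingredient you would need once the commutation with $(-\Delta)^{j}$ is abandoned. Your final elliptic-regularity and bookkeeping steps are fine, but the reduction that feeds them is not.
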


\begin{proof}
  We can assume $I=\mathbb{R}$.
  If $m=0$ and $F=0$,
  the result is Theorem 1.7 in \cite{Ivanovici10a},
  and if $F$ is nonzero the result follows by a
  standard Christ--Kiselev argument.
  If $m=1$, we apply $\partial_{t}$
  and use the estimate just obtained; this gives
  \begin{equation*}
    \|u_{t}\|_{L^{q}L^{r}}\lesssim
    \|\Delta u_{0}\|_{L^{2}}+\|F_{t}\|_{L^{1}L^{2}}
    \le 
    \|\Delta u_{0}\|_{L^{2}}+\|F\|_{X^{1,2;1}}.
  \end{equation*}
  Since $\Delta u=F-iu_{t}$ this implies
  \begin{equation}\label{eq:estDu}
    \|\Delta u\|_{L^{q}L^{r}}\lesssim
    \|\Delta u_{0}\|_{L^{2}}+\|F\|_{X^{1,2;1}}+
    \|F\|_{L^{q}L^{r}}.
  \end{equation}
  Note that $\|F\|_{L^{\infty}L^{2}}\lesssim\|F\|_{X^{1,2;1}}$;
  moreover, $X^{1,2;1}$ embeds into $L^{1}H^{2}$ and into
  $L^{\infty}L^{2}$, hence by complex interpolation it
  embeds into $L^{2}H^{1}$.
  If $n\ge3$, this embeds into the endpoint
  $L^{2}L^{\frac{2n}{n-2}}$ and hence in all admissible
  spaces $L^{q}L^{r}$ by interpolation with the
  embedding into $L^{\infty}L^{2}$.
  This proves the $L^{q}L^{r}$ estimate for $\Delta u$,
  and by $L^{q}$ elliptic regularity
  (see e.g.~\cite{Browder60-a}) this gives \eqref{eq:strich}
  for $m=1$, $n\ge1$. The same argument works for
  the case $m=1$, $n=2$ using the embedding 
  $H^{1}\hookrightarrow BMO$. Finally,
  for larger values of $m>1$, the usual recursion argument
  and the embedding 
  $X^{1,2;m}\hookrightarrow X^{q,r;m-1}$ just proved for 
  admissible $(q,r)$ allows to conclude the proof.
\end{proof}

Combining \eqref{eq:strich} with the perturbed energy
estimate \eqref{eq:estfree}, we obtain a similar result for the
perturbed linear problem
\begin{equation}\label{eq:linearVnh}
  i\partial_{t}u+\Delta u-V(t,x)u=F,
  \qquad
  u(t_{0},x)=f(x),
  \qquad
  u(t,\cdot)\vert_{\partial \Omega}=0.
\end{equation}

\begin{proposition}[Perturbed Strichartz estimate]
  \label{pro:strichV}
  Let $n\ge2$, $(q,r)$ a non endpoint
  admissible couple, and $I \subseteq \mathbb{R}$ an interval
  containing 0.
  Assume $(u_{0},F,V)$ satisfy the perturbed
  compatibility conditions of order $m$ for an $m\ge1$.
  Then the solution $u$ to Problem \eqref{eq:linearVnh} satisfies
  the Strichartz estimate
  \begin{equation}\label{eq:strichV}
    \|u\|_{X_{I}^{q,r;m}}\le
    C(m,p,\|V\|_{X_{I}^{1,\infty;m}})
    \cdot
    \left[
    \|u_{0}\|_{H^{2m}}+
    \|F\|_{X_{I}^{1,2;m}}
    \right].
  \end{equation}
\end{proposition}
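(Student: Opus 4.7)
The plan is to bootstrap from the two theorems already in hand: the free Strichartz estimate (Theorem \ref{the:strichext}) and the perturbed energy estimate (Theorem \ref{the:enerV}). Writing the solution to \eqref{eq:linearVnh} by Duhamel as in \eqref{eq:duham}, with forcing $\widetilde{F}=F+Vu$, and treating $Vu$ as a source term, Theorem \ref{the:strichext} yields
\begin{equation*}
  \|u\|_{X^{q,r;m}_{I}}\lesssim
  \|u_{0}\|_{H^{2m}}+\|F\|_{X^{1,2;m}_{I}}+\|Vu\|_{X^{1,2;m}_{I}},
\end{equation*}
provided $(u_{0},\widetilde F)$ satisfy the linear compatibility conditions of order $m$. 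As observed in the paragraph preceding \eqref{eq:Vass}, when $V$ is smooth the extra $V$-terms that would appear in the linear recurrences $h_j$ automatically belong to $H^1_0$, so the perturbed compatibility conditions on $(u_0,F,V)$ imply the linear compatibility conditions for $(u_0,F+Vu)$.

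The main technical point is the product estimate
\begin{equation}\label{eq:prodVu}
  \|Vu\|_{X^{1,2;m}_{I}}\lesssim
  \|V\|_{X^{1,\infty;m}_{I}}\cdot\|u\|_{X^{\infty,2;m}_{I}}.
\end{equation}
To prove \eqref{eq:prodVu} I would expand $\partial_t^k\partial_x^\alpha(Vu)$ with $2k+|\alpha|\le 2m$ by the Leibniz rule; each resulting term has the form $\partial_t^{k_1}\partial_x^{\alpha_1}V \cdot \partial_t^{k_2}\partial_x^{\alpha_2}u$ with $2k_1+|\alpha_1|\le 2m$ and $2k_2+|\alpha_2|\le 2m$. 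Placing the $V$-factor in $L^1_I L^\infty$ and the $u$-factor in $L^\infty_I L^2$ via Hölder in time and the pointwise product bound in space gives exactly \eqref{eq:prodVu}. This is the only step requiring genuine computation, and it is a routine Leibniz-and-Hölder exercise given the way the norms $X^{p,r;N}_I$ are defined.

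With \eqref{eq:prodVu} in hand, I substitute it into the free Strichartz bound to obtain
\begin{equation*}
  \|u\|_{X^{q,r;m}_{I}}\lesssim
  \|u_{0}\|_{H^{2m}}+\|F\|_{X^{1,2;m}_{I}}
  +\|V\|_{X^{1,\infty;m}_{I}}\cdot\|u\|_{X^{\infty,2;m}_{I}}.
\end{equation*}
The last factor $\|u\|_{X^{\infty,2;m}_I}$ is controlled by the perturbed energy estimate \eqref{eq:estfreeV} of Theorem \ref{the:enerV}, giving
\begin{equation*}
  \|u\|_{X^{\infty,2;m}_{I}}\le
  C(m,\|V\|_{X^{1,\infty;m}_{I}})
  \bigl[\|u_{0}\|_{H^{2m}}+\|F\|_{X^{1,2;m}_{I}}\bigr].
\end{equation*}
Combining the two bounds yields \eqref{eq:strichV} with a constant depending only on $m$ and $\|V\|_{X^{1,\infty;m}_I}$.

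The expected obstacle is not any single estimate but the bookkeeping in the product rule \eqref{eq:prodVu}: one must verify that the asymmetric placement (all derivatives of $V$ in $L^1_IL^\infty$, all derivatives of $u$ in $L^\infty_IL^2$) is compatible with every Leibniz term appearing in the $X^{1,2;m}_I$ norm, which is precisely why the author chose to measure $V$ in the $X^{1,\infty;m}_I$ norm in assumption \eqref{eq:Vass}. Once this is checked, no further argument beyond composing the two already proved theorems is needed, and the non-endpoint/endpoint restriction on $(q,r)$ is inherited directly from Theorem \ref{the:strichext}.
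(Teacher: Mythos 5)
Your proposal is correct and follows essentially the same route as the paper: Duhamel with $Vu+F$ as forcing, the free Strichartz estimate of Theorem \ref{the:strichext}, the product bound $\|Vu\|_{X^{1,2;m}_{I}}\lesssim\|V\|_{X^{1,\infty;m}_{I}}\|u\|_{X^{\infty,2;m}_{I}}$, and the perturbed energy estimate to close the loop. The paper's proof is a four-line version of exactly this argument (it even leaves the Leibniz bookkeeping and the compatibility-condition remark implicit, which you spell out).
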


\begin{proof}
  Write
  $u=e^{it \Delta}u_{0}+ i\int_{0}^{t}e^{i(t-s)\Delta}(Vu+F)dx$
  and apply \eqref{eq:strich} to get
  \begin{align*} 
    \|u\|_{X_{I}^{q,r;m}}  &  
    \lesssim
    \|u_{0}\|_{H^{2m}}+\|Vu\|_{X_{I}^{1,2;m}}+
    \|F\|_{X_{I}^{1,2;m}}
  \\ 
      &  \lesssim
    \|u_{0}\|_{H^{2m}}+\|V\|_{X_{I}^{1,\infty;m}}
    \|u\|_{X_{I}^{\infty,2;m}}+
    \|F\|_{X_{I}^{1,2;m}}.
  \end{align*}
  Using \eqref{eq:estfree} we obtain \eqref{eq:strichV}.
\end{proof}

\begin{remark}[]\label{rem:uubar}
  Note that the previous arguments 
  can be obviously applied without modification
  to the more general equations of the form
  \begin{equation*}
    iu_{t}+\Delta u=V_{1}(t,x)u+V_{2}(t,x)\overline{u}.
  \end{equation*}
\end{remark}

\section{The nonlinear theory}\label{sec:nonl_theo}

We consider now the nonlinear
mixed problem on $\mathbb{R}^{+}\times \Omega$
\begin{equation}\label{eq:NLpb}
  iu_{t}+\Delta u=f(u),
  \qquad
  u(0,x)=u_{0},
  \qquad
  u(t,\cdot)\vert_{\partial \Omega}=0.
\end{equation}
The compatibility conditions must be modified as follows.
Define recursively the sequence of functions 
$\psi_{j}(x)$ for $j\ge0$ as
\begin{equation*}
  \psi_{0}=u(0,x)=u_{0},
  \qquad
  \psi_{j}=\partial^{j}_{t}u(0,x)=
  \iota^{-1}(\partial_{t}^{j-1}f(u)\vert_{t=0}-
    \Delta \psi_{j-1}(x))
\end{equation*}
where in the expansion of $\partial_{t}^{j-1}f(u)$
we replace $\partial^{k}_{t}u(0,x)$ with $\psi_{k}$ for $k<j$.

\begin{definition}[Nonlinear compatibility conditions]
\label{def:NLcc}
  We say that the data $(u_{0},f)$ satisfy the
  \emph{nonlinear compatibility conditions of order}
  $N\ge1$ if $u_{0}\in H^{2N}(\Omega)\cap H^{1}_{0}(\Omega)$,
  $f\in C^{N}$, $f(0)=0$ and
  \begin{equation}\label{eq:NLcc}
    \psi_{j}\in H^{1}_{0}(\Omega)
    \quad\text{for}\quad 
    0\le j\le N-1.
  \end{equation}
\end{definition}

We get a local smooth solution to \eqref{eq:NLpb}
by a standard contraction argument:

\begin{proposition}[Local existence in $H^{\frac n2+}$]
\label{pro:local}
  Assume $(u_{0},f)$ in \eqref{eq:NLpb} satisfy the
  nonlinear compatibility condition of order $N$ for some
  integer $N>n/2$. Then there exists a time $T>0$
  and a unique solution of \eqref{eq:NLpb} on
  $[0,T]\times \Omega$ such that
  $u\in C^{N}([0,T];L^{2}(\Omega))$ and
  $u\in C^{k}([0,T];H^{2(N-k)}(\Omega)\cap H^{1}_{0}(\Omega))$ 
  for $0\le k\le N-1$.
\end{proposition}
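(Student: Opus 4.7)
The plan is to solve the problem by a Picard fixed point argument in $X^{\infty,2;N}_T$ for $T$ sufficiently small, using the linear energy estimate \eqref{eq:estfree} of Theorem \ref{the:linearthm}. The crucial structural ingredient is the embedding $H^{2N}\hookrightarrow L^\infty$, valid since $2N>n$, which makes $H^{2N}(\Omega)$ a Banach algebra and allows composition with a $C^N$ nonlinearity through Moser-type tame estimates.

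First I would fix a radius $R>0$ larger than a suitable quantity built from the initial layer $(\psi_0,\dots,\psi_N)$, and introduce the set
\[
  \mathcal{B}_{T,R}=\bigl\{v\in X^{\infty,2;N}_T\colon
    \|v\|_{X^{\infty,2;N}_T}\le R,\
    \partial_t^j v(0,x)=\psi_j,\ 0\le j\le N-1\bigr\},
\]
complete with the distance inherited from $L^\infty_T L^2$. For $v\in\mathcal{B}_{T,R}$ define $\Phi(v)=u$ as the solution of the linear problem \eqref{eq:linear} with datum $u_0$ and forcing $F=f(v)$. A preliminary point is to verify that $(u_0,f(v))$ satisfies the linear compatibility conditions of Definition \ref{def:lcc}: this reduces by induction on $j$ to checking that the linear sequence $h_j$ built from $(u_0,f(v))$ coincides with the nonlinear sequence $\psi_j$ of \eqref{eq:NLcc}. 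The induction closes precisely because, by construction, elements of $\mathcal{B}_{T,R}$ have the prescribed time-derivatives $\psi_k$ at $t=0$, so $\partial_t^{j-1}(f(v))|_{t=0}$ is the same polynomial in $(\psi_0,\dots,\psi_{j-1})$ that appears in the definition of $\psi_j$.

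The core nonlinear estimates I would establish are tame bounds
\[
  \|f(v)\|_{X^{1,2;N}_T}\le T\cdot C(R),
  \qquad
  \|f(v_1)-f(v_2)\|_{L^1_T L^2}\le T\cdot C(R)\,\|v_1-v_2\|_{L^\infty_T L^2},
\]
for $v,v_1,v_2\in\mathcal{B}_{T,R}$. These follow by expanding $\partial_t^k f(v)$ via Faà di Bruno's formula as sums of products of $f^{(\ell)}(v)$ and time derivatives $\partial_t^{k_i}v$, then estimating each factor in the appropriate $L^\infty_T H^{2j}$ norm using the algebra property of $H^{2N}$; the factor $T$ comes from trading $L^\infty_T$ for $L^1_T$ on the bounded time interval. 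Feeding these into \eqref{eq:estfree} shows that $\Phi$ maps $\mathcal{B}_{T,R}$ into itself and is a contraction in the $L^\infty_T L^2$ metric, provided $T=T(R)$ is chosen small enough. The unique fixed point is the desired solution, which inherits the full regularity $u\in C^N L^2\cap\bigcap_{k<N} C^k(H^{2(N-k)}\cap H^1_0)$ via Theorem \ref{the:linearthm}. Uniqueness at this regularity level follows again from the $L^\infty_T L^2$ estimate applied to the difference of two solutions.

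The main obstacle I expect is the careful bookkeeping for the Moser-type tame estimates on $\partial_t^k f(v)$ when $f$ is only $C^N$, together with the verification that the linear compatibility sequence of $(u_0,f(v))$ really coincides with the nonlinear sequence $\psi_j$ at every iterate; the structure of $\mathcal{B}_{T,R}$ is tailored precisely so that both checks go through. Once these (routine but delicate) steps are in place, the Picard scheme closes in the standard way.
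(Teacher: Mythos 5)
Your proposal is correct and follows the same overall scheme as the paper: linearize the equation by freezing $v$ in the nonlinearity, check that $(u_0,f(v))$ inherits the linear compatibility conditions, apply the energy estimate \eqref{eq:estfree}, gain a factor $T$ by trading $L^\infty_T$ for $L^1_T$, and close a Picard iteration via Moser-type estimates. The two proofs differ in the technical set-up of the fixed point. The paper works on the ball $B_M$ of $Z_T$ centered at the constant-in-time extension $\underline{v}(t,x)=u_0(x)$ (only the value $v(0)=u_0$ is prescribed, so the compatibility check rests on the observation that each term of $\partial_t^{j-1}f(v)\vert_{t=0}$ still lies in $H^1_0$ even when the higher jets of $v$ at $t=0$ differ from the $\psi_j$), and it contracts in the \emph{full} metric $\|\cdot\|_{X_T^{\infty,2;N}}$, which requires the Lipschitz Moser estimate at top order. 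You instead prescribe the entire jet $\partial_t^j v(0)=\psi_j$ (which makes the identification $h_j=\psi_j$ immediate, though you must then also check that $\Phi$ preserves this constraint --- it does, since $\partial_t^j\Phi(v)(0)=h_j$) and contract only in the weaker $L^\infty_T L^2$ metric on a ball bounded in $X^{\infty,2;N}_T$. This weak-norm contraction is the more robust variant: it needs only a first-order Lipschitz bound on $f$ rather than a Lipschitz Moser estimate in $X^{\infty,2;N}_T$ (the latter is delicate when $f$ is merely $C^N$, as in Definition \ref{def:NLcc}), at the price of having to verify that your set $\mathcal{B}_{T,R}$ is complete for the $L^\infty_T L^2$ distance --- i.e.\ that the top-order norms pass to the limit by weak-$*$ lower semicontinuity and that the limit regains strong time-continuity once identified as a fixed point. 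Both routes are sound; just make sure to record that completeness argument, which is the one step your write-up leaves implicit.
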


\begin{proof}
  Let $T>0$ and denote by $Z_{T}$ the space of functions
  $v(t,x)$ such that
  \begin{equation*}
    v\in\cap_{k=0}^{N-1}C^{k}([0,T];H^{2(N-k)}(\Omega)\cap
      H^{1}_{0}(\Omega))
      \cap C^{N}([0,T];L^{2}(\Omega)),
    \quad
    v(0,x)=u_{0}(x)
  \end{equation*}
  endowed with the metric
  $d(v,w)=\|v-w\|_{X_{T}^{\infty,2;N}}$.
  Consider the linearized problem
  \begin{equation}\label{eq:linearized}
    iu_{t}+\Delta u=f(v(t,x)),
    \qquad
    u(0,x)=u_{0},
    \qquad
    u(t,\cdot)\vert_{\partial \Omega}=0.
  \end{equation}
  If $(u_{0},f)$ satisfy \eqref{eq:NLcc}
  and $v(t,x)\in Z_{T}$, 
  then setting $F(t,x)=f(v(t,x))$ we see that
  the data $(u_{0},F)$ satisfy the linear compatibility condition
  of order $N$ \eqref{eq:Lcc} (note that $u_{0}$ is bounded
  since $2N>n/2$).
  Thus the solution of \eqref{eq:linearized} 
  is uniquely defined on $[0,T]\times \Omega$ and has the
  properties listed in Theorem \ref{the:linearthm}.
  Hence the map $\Phi: v \mapsto u$ which takes
  $v$ into the solution $u$ of the linearized problem 
  \eqref{eq:linearized} operates on the metric space $Z_{T}$.
  If $v_{1},v_{2}\in Z_{T}$ then $w=\Phi(v_{1})-\Phi(v_{2})$
  solves the problem
  \begin{equation*}
    iw_{t}+\Delta w=f(v_{1})-f(v_{2}),
    \qquad
    w(0,x)=0,
    \qquad
    w(t,\cdot)\vert_{\partial \Omega}=0.
  \end{equation*}
  Since $(0,G)$ with $G=f(v_{1})-f(v_{2})$ satisfy the
  linear compatibility conditions, we can apply 
  \eqref{eq:estfree}:
  \begin{equation*}
    \|\Phi(v_{1})-\Phi(v_{2})\|_{X_{T}^{\infty,1;N}}
    \le C(N)\|f(v_{1})-f(v_{2})\|_{X_{T}^{1,2;N}}
    \le C(N) T 
    \|f(v_{1})-f(v_{2})\|_{X_{T}^{\infty,2;N}}.
  \end{equation*}
  Moreover, for $N>n/2$ we have the Moser type estimates
  \begin{equation*}
    \|f(v)\|_{X_{T}^{\infty,2;N}}
    \le \phi_{N}(\|v\|_{X_{T}^{\infty,2;N}}),
  \end{equation*}
  \begin{equation*}
    \|f(v_{1})-f(v_{2})\|_{X_{T}^{\infty,2;N}}
    \le \phi_{N}(\|v_{1}\|_{X_{T}^{\infty,2;N}}+
    \|v_{2}\|_{X_{T}^{\infty,2;N}})
    \|v_{1}-v_{2}\|_{X_{T}^{\infty,2;N}}
  \end{equation*}
  where $\phi_{N}$ is a suitable nondecreasing function
  depending only on $N$ and $f$.

  Now let $\underline{v}(t,x)=u_{0}(x)$ for all $t$;
  note that $\underline{v}\in Z_{T}$, and let
  $B_{M}$ be the closed ball in $Z_{T}$ centered at $\underline{v}$
  of radius $M$.
  By the previous estimates, it is trivial to check
  that $\Phi:B_{M}\to B_{M}$ 
  and $\|\Phi(v_{1})-\Phi(v_{2})\|_{Z_{T}}\le C(M)T\le \frac 12$
  provided $M$ is large enough w.r.to
  $\|\underline{v}\|_{Z_{T}}=\|u_{0}\|_{H^{2N}}$ and 
  $T$ is sufficiently small w.r.to $M$.
  A contraction argument then implies the claim.
\end{proof}

\section{The radial solution}\label{sec:radial}

\subsection{Existence: Proof of Proposition \ref{pro:globalrad}}
\label{sub:exisrad}

Consider the equation
\begin{equation}\label{eq:defoc}
  iu_{t}+\Delta u=|u|^{p-1}u,
  \qquad
  u(0,x)=u_{0}(x),
  \qquad
  u(t,\cdot)\vert_{\partial \Omega}=0
\end{equation}
with \emph{radial} initial data 
$u_{0}\in H^{2}(\Omega)\cap H^{1}_{0}(\Omega)$.
The existence of a local solution for $t\in[0,T]$ is standard
and similar to the proof of Proposition \ref{pro:local}.
We use the space $Y_{T}$
of functions $v(t,x)$, radial in $x$, such that
\begin{equation*}
  v\in 
  C^{1}([0,T];L^{2}(\Omega))
      \cap C([0,T];H^{1}_{0}(\Omega) \cap H^{2}(\Omega)),
  \qquad
  v(0,x)=u_{0}.
\end{equation*}
We endow $Y_{T}$ with the distance 
$d(v,w)=\|v-w\|_{X_{T}^{\infty,2;1}}$.
By radiality and \eqref{eq:strausslemma}, functions in $Y_{T}$
are bounded. Hence the map $\Phi:v \mapsto u$,
defined as above via the linearization \eqref{eq:linearized},
operates on $Y_{T}$ and satisfies
\begin{equation*}
  \|u\|_{X_{T}^{\infty,2;1}}\lesssim
  \|u_{0}\|_{H^{2}}+\||v|^{p-1}v\|_{X_{T}^{1,2;1}}
  \lesssim \|u_{0}\|_{H^{2}}
  + \|v\|_{L^{\infty}_{T}L^{\infty}}^{p-1}
  \|v\|_{X_{T}^{1,2;1}}.
\end{equation*}
Estimate \eqref{eq:strausslemma} implies that
$\|v\|_{L^{\infty}_{T}L^{\infty}}
  \lesssim\|v\|_{X_{T}^{\infty,2;1}}$
so that
\begin{equation*}
  \|\Phi(v)\|_{X_{T}^{\infty,2;1}}\lesssim
  \|u_{0}\|_{H^{2}}
  + \|v\|_{X_{T}^{\infty,2;1}}^{p-1}
  \|v\|_{X_{T}^{1,2;1}}
  \lesssim
  \|u_{0}\|_{H^{2}}
  +T\|v\|_{X_{T}^{\infty,2;1}}^{p}.
\end{equation*}
In a similar way,
\begin{equation*}
  \|\Phi(v_{1})-\Phi(v_{2})\|_{X_{T}^{\infty,2;1}}\lesssim
  (
  \|v_{1}\|_{X_{T}^{\infty,2;1}}+
  \|v_{2}\|_{X_{T}^{\infty,2;1}})^{p-1}
  \cdot T\|v_{1}-v_{2}\|_{X_{T}^{\infty,2;1}}^{p}.
\end{equation*}
Thus taking $T$ sufficiently small with respect to
$\|u_{0}\|_{H^{2}}$ we obtain a local solution with
the required regularity. 

The local solution satisfies the energy conservation
\eqref{eq:conservs}, which implies the uniform bound
\eqref{eq:unibound} for $0\le t\le T$.
In particular, $\|u(t)\|_{H^{2}\cap L^{p+1}}$
remains bounded on $[0,T]$ by a constant depending
only on $\|u_{0}\|_{H^{2}\cap L^{p+1}}$.
A standard continuation argument allows to extend $u$
to a global solution, which satisfies \eqref{eq:conservs}
and hence \eqref{eq:unibound} for all times.
The claim about uniqueness follows immediately from
energy estimates.

Assume now that the data satisfy the compatibility conditions
of order $N\ge2$. Differentiating the equation w.r.to $t$
we see that $v=\partial_{t}u$ 
satisfies an equation of the form
\begin{equation*}
  iv_{t}+\Delta v=a(t,x)v+b(t,x)\overline{v}
\end{equation*}
with $|a|+|b|\lesssim|u|^{p-1}$ bounded.
By the linear theory we get $v\in C L^{2}$ 
i.e.~$u\in C^{1}L^{2}$.
Further differentiating w.r.to $t$, by a recursive argument
we get $u\in C^{N}L^{2}$, and using the equation itself we
obtain that 
$u\in C^{k}(H^{2(N-k)}(\Omega)\cap H^{1}_{0}(\Omega))$ 
for $0\le k\le N-1$.

\subsection{Decay: Proof of Proposition \ref{pro:decayrad}}
\label{sub:decay}

Apply to $u$ the pseudoconformal transform
\begin{equation*}
  \textstyle
  u(t,x)=t^{-\frac n2}U(-\frac 1t,\frac xt)
  e^{\frac{i|x|^{2}}{4t}}.
\end{equation*}
If $u(t,x)$ is defined on the domain $t\ge1$,
$|x|\ge1$, then $U(T,X)$ is defined for
$-1\le T<0$, $|X|\ge|T|$ and we have
\begin{equation*}
  \textstyle
  iu_{t}+\Delta_{x} u-|u|^{p-1}u=
  t^{-\frac n2-2}
  (iU_{T}+\Delta_{X}U-(-T)^{\nu}|U|^{p-1}U)
  e^{\frac{i|x|^{2}}{4t}}
\end{equation*}
with $\nu=\frac n2(p-1)-2$.
The energy density
\begin{equation*}
  e_{0}(T,X)=
  \textstyle
  \frac 12|\nabla_{X}U|^{2}+\frac{1}{p+1}|U|^{p+1}
\end{equation*}
satisfies the identity
\begin{equation*}
  \textstyle
  \partial_{T}e_{0}=
  \Re \nabla \cdot\{\overline{U}_{T}\nabla U\}
  -
  \frac{\nu(-T)^{\nu-1}}{p+1}|U|^{p+1}.
\end{equation*}
Since $U$ is a radial function we have
\begin{equation*}
  R^{n-1}\nabla_{X}\cdot\{\overline{U}_{T}\nabla U\}=
  \partial_{R}\{R^{n-1}U_{R}\overline{U}_{T}\}
\end{equation*}
where $\partial_{R}U=U_{R}=\frac{X}{|X|}\cdot \nabla_{X} U$
denotes the radial derivative of $U$ and $R=|X|$.
Introduce the radial energy density
\begin{equation*}
  \textstyle
  e(T,R)=R^{n-1}
  \left(
    \frac 12|U_{R}|^{2}+\frac{1}{p+1}|U|^{p+1}
  \right)
\end{equation*}
so that
\begin{equation}\label{eq:intermen}
  \partial_{T}e(T,R)=
  \Re \partial_{R}\{R^{n-1}U_{R}\overline{U}_{T}\}
  -
  \textstyle
  \frac{\nu(-T)^{\nu-1}}{p+1}R^{n-1}|U(T)|^{p+1}.
\end{equation}
We now integrate the identity
\eqref{eq:intermen} on the $(T,R)$ domain
$T_{1}\le T\le T_{2}$, $R\ge -T$
for some $-1\le T_{1}<T_{2}<0$.
Note that the exterior normal on the line $R=-T$ 
(for $T<0$) is given by
$\mathbf{n}=(-\frac{1}{\sqrt{2}}, -\frac{1}{\sqrt{2}})$.
Writing
\begin{equation*}
  \mathcal{E}(T)=\int_{-T}^{+\infty}e(T,R)dR
\end{equation*}
after integration of \eqref{eq:intermen} we obtain
\begin{equation*}
\begin{split}
  \textstyle
  \mathcal{E}(T_{2})-\mathcal{E}(T_{1})
  -\frac{1}{\sqrt{2}}\int_{T_{1}}^{T_{2}}
  &
  e(T,-T)dT
  =
  \\
  =&
  \textstyle
  -
  \frac{1}{\sqrt{2}}
  \Re\int_{T_{1}}^{T_{2}}U_{R}\overline{U}_{T}|T|^{n-1}dT
  -
  \int_{T_{1}}^{T_{2}}\int_{-T}^{+\infty}
  \frac{\nu(-T)^{\nu-1}|U|^{p+1}}{p+1}dRdT
\end{split}
\end{equation*}
since $-T=|T|=R$ at the points of the cone.
Writing (with a slight abuse) $U(T,|X|)$ instead of
$U(T,X)$, the Dirichlet condition implies
$U(T,-T)=0$, so that
\begin{equation*}
  \textstyle
  e(T,-T)=|T|^{n-1}\frac 12|U_{R}|^{2}.
\end{equation*}
On the other hand, differentiating $U(T,-T)=0$ we get
$U_{T}(T,-T)=U_{R}(T,-T)$.
Hence the previous estimate reduces to
\begin{equation*}
  \textstyle
  \mathcal{E}(T_{2})-\mathcal{E}(T_{1})
  +
  \int_{T_{1}}^{T_{2}}\int_{-T}^{+\infty}
  \frac{\nu(-T)^{\nu-1}|U|^{p+1}}{p+1}dRdT
  \le
  \frac{1}{\sqrt{2}}
  \int_{T_{1}}^{T_{2}}
  \left(
    \frac{|U_{R}|^{2}}{2}-|U_{R}|^{2}
  \right)
  |T|^{n-1}dT
  \le
  0.
\end{equation*}
Thus $\mathcal{E}(T)$ is nonincreasing as $T \uparrow 0$ and in
particular $\mathcal{E}(T)\le\mathcal{E} (-1)$ 
for $-1<T<0$. This implies
\begin{equation*}
  \textstyle
  \int_{-T}^{+\infty}|U_{R}|^{2}R^{n-1}dR\le 2\mathcal{E}(-1)
  \quad\text{for}\quad -1<T<0
\end{equation*}
or equivalently
\begin{equation*}
  \textstyle
  \int_{|X|>-T}|\nabla_{X}U|^{2}dX\le 2\mathcal{E}(-1)
  \quad\text{for}\quad -1<T<0.
\end{equation*}
Note that, if we extend $U$ as zero in the region $|X|<-T$,
the extended function $\widetilde{U}(T,\cdot)$ is 
$H^{1}(\mathbb{R}^{n})$
and radial, hence we can apply \eqref{eq:strausslemma}
and we obtain
\begin{equation}\label{eq:strE}
  \textstyle
  |U(T,X)|^{2}\lesssim
  |X|^{2-n}
  \int|\nabla_{X}\widetilde{U}|^{2}dX
  =
  |X|^{2-n}
  \int_{|X|>-T}|\nabla_{X}U|^{2}dX
  \le
  2|X|^{2-n}\mathcal{E}(-1).
\end{equation}
We convert \eqref{eq:strE} into an estimate for $u(t,x)$. We have
\begin{equation*}
  \textstyle
  \mathcal{E}(-1)=\int_{|X|>1}
  \left(
    \frac{|\nabla U(-1,X)|^{2}}{2}+
    \frac{|U(-1,X)|^{p+1}}{p+1}
  \right)
  dX.
\end{equation*}
Since
\begin{equation*}
  \textstyle
  U(T,X)=(-T)^{-\frac n2}u(-\frac 1T,\frac XT)
  e^{\frac{i|X|^{2}}{4T}}
\end{equation*}
we compute
\begin{equation*}
  \textstyle
  \nabla_{X}U(T,X)=
  (-T)^{-\frac n2}
  e^{\frac{i|X|^{2}}{4T}}
  \left[
    \nabla_{x}u(-\frac 1T,\frac XT) \cdot\frac{1}{T}+
    u(-\frac 1T,\frac XT) \cdot i \frac{X}{2T}
  \right]
\end{equation*}
so that
\begin{equation*}
  \textstyle
  |\nabla_{X}U(-1,X)|\le
  |\nabla u(1,-X)|+|Xu(1,-X)|
\end{equation*}
and
\begin{equation}\label{eq:estE1}
  \textstyle
  \mathcal{E}(-1)\le
  \int_{|x|>1}
  \left[
    |\nabla u(1,x)|^{2}+|xu(1,x)|^{2}+
    |u(1,x)|^{p+1}
  \right]dx.
\end{equation}
On the other hand, changing variables
$(t,x)=(-\frac 1T,\frac XT)$ in $\mathcal{E}(T)$ 
and writing 
\begin{equation*}
  \textstyle
  \mathcal{E}_{1}(t)=\mathcal{E}(-\frac 1t)
\end{equation*}
we get, after a standard computation,
\begin{equation*}
  \textstyle
  \mathcal{E}_{1}(t)=
  \int_{|x|>1}
  \left[
    \frac 18|(x+2it \nabla)u(t,x)|^{2}+
    \frac{|t|^{\frac n2(p-1)}}{p+1}|u(t,x)|^{p+1}
  \right]
  dx
\end{equation*}
and $\mathcal{E}_{1}(t)$ is nonincreasing in $t$ by the
previous computation; note this is a proof of the
pseudoconformal energy conservation on an exterior domain.
Hence we have
\begin{equation*}
  \|xu(t)\|_{L^{2}(\Omega)}^{2}
  \lesssim
  \mathcal{E}_{1}(t)+
  (t+t^{\frac n2(p-1)})E(u(t))\le
  \mathcal{E}_{1}(0)+\bra{t}^{\frac n2(p-1)}E(u_{0})
\end{equation*}
and in conclusion
\begin{equation*}
  \|xu(t)\|_{L^{2}(\Omega)}^{2}
  \le
  C(t)
  \left[
    \|xu_{0}\|_{L^{2}(\Omega)}^{2}+
    E(u(0))
  \right].
\end{equation*}
Combined with the conservation of $E(u(t))$ and \eqref{eq:estE1}
this gives
\begin{equation*}
  \mathcal{E}(-1)
  \lesssim
  \|xu_{0}\|_{L^{2}(\Omega)}^{2}+ E(u(0))
\end{equation*}
and, recalling \eqref{eq:strE}, we have proved
\begin{equation*}
  |U(T,X)|\lesssim|X|^{1-\frac n2}
  \left[
    \|xu_{0}\|_{L^{2}(\Omega)}+ E(u(0))^{1/2}
  \right].
\end{equation*}
Writing $|U(T,X)|=|T|^{-n/2}|u(-\frac 1T,\frac XT)|$ we finally
obtain
\begin{equation*}%
  \textstyle
  t^{\frac n2}|u(t,x)|
  \le C|x|^{1-\frac n2}
  \cdot t^{\frac n2-1}
  \left[
    \|xu_{0}\|_{L^{2}(\Omega)}+ E(u(0))^{1/2}
  \right]
\end{equation*}
that is to say
\begin{equation}\label{eq:decayu}
  |u(t,x)|\le
  C
  \left[
    \|xu_{0}\|_{L^{2}(\Omega)}+ E(u(0))^{1/2}
  \right]
  \cdot|x|^{1-\frac n2}t^{-1}.
\end{equation}
Note that, using \eqref{eq:strausslemma}, we have
\begin{equation*}
  \textstyle
  E(u(0))=\frac 12\|\nabla_{x}u_{0}\|_{L^{2}}^{2}
  + \frac{1}{p+1}\int|u_{0}|^{p+1}\le
  \|\nabla_{x}u_{0}\|_{L^{2}}^{2}+
  \|u_{0}\|_{L^{\infty}}^{p-1}\int|u_{0}|^{2}
  \le C(\|u_{0}\|_{H_{1}}).
\end{equation*}
Using \eqref{eq:decayu} for $t>1$, and the inequality
\begin{equation*}
    |u(t,x)|\le C|x|^{1-\frac n2}\|\nabla_{x}u\|_{L^{2}}
\end{equation*}
for $t\le1$, we obtain as claimed
\begin{equation*}
  |u(t,x)|\le
  C
  \cdot|x|^{1-\frac n2}\bra{t}^{-1}.
\end{equation*}
with a constant depending on
$\|xu_{0}\|_{L^{2}(\Omega)}+ \|u_{0}\|_{H^{1}}$.

\subsection{Non inflation of Sobolev norms}
\label{sub:non_inflation}

We now consider the issue of regularity; 
we prove that if the data are smoother
the solution remains smooth and Sobolev norms remain bounded,
for any order of regularity.

\begin{corollary}[]\label{cor:regul}
  Let $N\ge1$, $p> 2N-1$, and assume $(u_{0},f)$
  with $f(z)=|z|^{p-1}z$ 
  satisfy the compatibility conditions of order $N$.
  Assume in addition that $xu_{0}\in L^{2}(\Omega)$.
  Then the radial solution constructed in 
  Proposition \ref{pro:globalrad} satisfies the 
  uniform bound on $\mathbb{R}^{+}\times \Omega$
  \begin{equation}\label{eq:boundsHN}
    \|u(t,\cdot)\|_{X^{\infty,2;N}}
    \le C(\|u_{0}\|_{H^{2N}},\|xu_{0}\|_{L^{2}}).
  \end{equation}
\end{corollary}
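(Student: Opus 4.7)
The plan is to induct on $N$, viewing the nonlinear equation as a linear Schr\"odinger equation with a time-decaying potential and then applying the perturbed energy estimate of Theorem~\ref{the:enerV}. The pointwise decay of $u$ from Proposition~\ref{pro:decayrad} supplies the time-integrability of that potential, while the compatibility assumption feeds directly into the linear theorem.

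Rewrite the equation as
\begin{equation*}
  i u_t + \Delta u = V(t,x)\, u, \qquad V(t,x):=|u(t,x)|^{p-1}.
\end{equation*}
Since $|x|\ge 1$ and $n\ge 2$ make $|x|^{(1-n/2)(p-1)}\le 1$, the decay estimate \eqref{eq:decayrad} yields
\begin{equation*}
  \|V(t,\cdot)\|_{L^\infty(\Omega)}\lesssim\bigl(\|xu_0\|_{L^2}+\|u_0\|_{H^1}\bigr)^{p-1}\langle t\rangle^{-(p-1)},
\end{equation*}
which is $L^1_t$-integrable whenever $p>2$, in particular under $p>2N-1$ for $N\ge 1$. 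The base case $N=1$ is essentially already contained in Proposition~\ref{pro:globalrad}: energy conservation and Strauss' lemma uniformly control $\|u(t)\|_{H^1\cap L^\infty}$; the equation $\Delta u=-iu_t+|u|^{p-1}u$ then trades this into control of $\|u(t)\|_{H^2}$ and $\|u_t(t)\|_{L^2}$; and Theorem~\ref{the:enerV} applied with potential $V$ on $I=\mathbb{R}^+$ upgrades the local bound to the uniform bound \eqref{eq:boundsHN} at $N=1$.

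For the inductive step, assume \eqref{eq:boundsHN} at level $N-1$ and estimate $\|V\|_{X^{1,\infty;N}_{\mathbb{R}^+}}$. Any mixed derivative $\partial_t^{N-j}\partial_x^\alpha V$ with $|\alpha|\le 2j$ expands via Fa\`a di Bruno into a sum of products
\begin{equation*}
  |u|^{p-1-\ell}\prod_{i=1}^\ell\partial_t^{k_i}\partial_x^{\beta_i}u,
  \qquad \sum_i(2k_i+|\beta_i|)\le 2N,\qquad 1\le\ell\le 2N.
\end{equation*}
By the inductive bound on $\|u\|_{X^{\infty,2;N-1}}$ and the Sobolev embedding $H^{2m}(\Omega)\hookrightarrow L^\infty$ for $m=\lfloor n/2\rfloor+1$, each differentiated factor is uniformly bounded in $L^\infty_x$, provided the Leibniz distribution keeps each individual order $\le 2(N-1)$; the undifferentiated prefactor $|u|^{p-1-\ell}$ carries the decay $\langle t\rangle^{-(p-1-\ell)}$ supplied by \eqref{eq:decayrad}. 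The threshold $p>2N-1$ is precisely what keeps this time-weight $L^1_t$-integrable in every term, using when $\ell$ is close to $2N$ the extra $\langle t\rangle^{-1}|x|^{1-n/2}$ decay of each additional factor of $u$ through Strauss' inequality. Once $\|V\|_{X^{1,\infty;N}_{\mathbb{R}^+}}<\infty$ is established, Theorem~\ref{the:enerV} applied on $I=\mathbb{R}^+$ gives \eqref{eq:boundsHN} directly, with a constant depending on $\|u_0\|_{H^{2N}}$ and on $\|xu_0\|_{L^2}$ through its appearance in the decay estimate.

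The main obstacle is the Fa\`a di Bruno bookkeeping: for each decomposition of the $2N$ derivatives into an $\ell$-tuple $(k_i,\beta_i)$ one has to verify $L^1_tL^\infty_x$-integrability of the corresponding product on $\mathbb{R}^+\times\Omega$, and the threshold $p>2N-1$ is exactly what makes this combinatorial constraint work by ensuring enough undifferentiated powers of the decaying function $u$ survive in every term. A related technical point is the limited differentiability of $z\mapsto|z|^{p-1}z$ at $z=0$: the chain-rule manipulations above are legitimised either directly by the hypothesis $p>2N-1$ (which implies $f\in C^N$ in particular) or, if one prefers to avoid the boundary regularity case, via a regularised nonlinearity followed by a limiting argument that uses the perturbed Strichartz estimate of Proposition~\ref{pro:strichV}.
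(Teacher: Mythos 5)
There is a genuine gap in the proposed mechanism. To invoke Theorem~\ref{the:enerV} at order $N$ you must verify $\|V\|_{X^{1,\infty;N}}<\infty$ with $V=|u|^{p-1}$, and this norm contains terms such as $\|\partial_t^{N}V\|_{L^{1}L^{\infty}}$ and $\|V\|_{L^{1}W^{2N,\infty}}$, whose top--order contributions are of the form $|u|^{p-3}\,\overline{u}\,\partial_t^{N}u$ or $|u|^{p-3}\,\overline{u}\,\partial_x^{2N}u$ and must be placed in $L^{\infty}_{x}$. The bound $\|u\|_{X^{\infty,2;N}}$ --- which is the \emph{conclusion} of the corollary, let alone the inductive hypothesis at level $N-1$ --- only gives these top--order derivatives in $L^{2}_{x}$; promoting them to $L^{\infty}_{x}$ costs $\lfloor n/2\rfloor+1$ extra derivatives by Sobolev (or at least one extra derivative via Strauss' lemma for the radial factors $\partial_t^{k}u$), so your induction does not close with $u_{0}\in H^{2N}$ only. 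This is exactly the derivative loss that the paper accepts in Section~\ref{sec:main}, where Theorem~\ref{the:enerV} is applied with $V=p|u|^{p-1}$ at the price of assuming $u_{0}\in H^{2(m+N_{2})}$; the proof of Corollary~\ref{cor:regul} is structured precisely to avoid it. The paper instead differentiates the \emph{equation} $j$ times in $t$ and uses the plain Duhamel estimate \eqref{eq:L2est} together with Gronwall: in $\partial_t^{j}(|u|^{p-1}u)$ the top--order factor $\partial_t^{j}u$ is kept in $L^{2}_{x}$ and multiplied by $\|u\|_{L^{\infty}}^{p-1}\lesssim\bra{t}^{1-p}$, while only the strictly lower--order factors $\partial_t^{h}u$, $h<j$, are put in $L^{\infty}_{x}$ via Strauss and the already established bounds; spatial regularity is then recovered a posteriori from the equation and elliptic regularity. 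Your base case has the same flaw in miniature: you cannot obtain both $\|u_t\|_{L^2}$ and $\|u\|_{H^2}$ from the relation $\Delta u=-iu_t+|u|^{p-1}u$ without first controlling one of them uniformly, which the paper does by Gronwall starting from $\|u_t(0)\|_{L^2}\le C(\|u_0\|_{H^2})$.

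A second, quantitative problem: even granting the $L^{\infty}$ bounds on all differentiated factors, your worst Fa\`a di Bruno term for $\partial^{2N}V$ has $\ell=2N$ differentiated factors and an undifferentiated prefactor $|u|^{p-1-2N}$, hence time decay $\bra{t}^{2N+1-p}$, which is integrable only for $p>2N+2$ --- strictly stronger than the stated threshold $p>2N-1$. The derivatives of $u$ carry no time decay (Proposition~\ref{pro:decayrad} applies to $u$ alone), so the appeal to ``extra decay through Strauss'' does not repair this. The paper's count is milder because it differentiates the full nonlinearity $|u|^{p-1}u$ only up to order $2(N-1)$ (the $2N$--th derivatives being handled through $\partial_t^N u\in L^2$ and elliptic regularity), leaving $|u|^{p-2(N-1)}$ and the decay $\bra{t}^{2(N-1)-p}$, integrable exactly when $p>2N-1$.
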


Recall that for all times $t$ we have
$\|u(t)\|_{L^{2}}=\|u_{0}\|_{L^{2}}$.
We next prove
uniform bounds for the derivatives of $u$;
in the following proof we write for brevity
\begin{equation*}
  C(\|u_{0}\|_{H^{M}})=
  C(\|u_{0}\|_{H^{M}},\|xu_{0}\|_{L^{2}}).
\end{equation*}
leaving the dependence on $\|xu_{0}\|_{L^{2}}$ implicit.

Differentiating the equation once with respect
to $t$ we see that $v=u_{t}$ solves
\begin{equation*}
  iv_{t}+\Delta v=\partial_{t}(|u|^{p-1}u)
\end{equation*}
and we have by \eqref{eq:decayrad}
\begin{equation*}
  \textstyle
  \|\partial_{t}(|u|^{p-1}u)\|_{L_{T}^{1}L^{2}}
  \lesssim
  \int_{0}^{T}
    \|u\|_{L^{\infty}}^{p-1}\|v\|_{L^{2}} dt
  \lesssim
  \int_{0}^{T}\bra{t}^{1-p}\|v\|_{L^{2}} dt.
\end{equation*}
Thus we can write
\begin{equation*}
  \textstyle
  \|v\|_{L^{\infty}_{T}L^{2}}=
  \|u_{t}\|_{L^{\infty}L^{2}}\lesssim
  \|u_{t}(0)\|_{L^{2}}+
  \int_{0}^{T}\bra{t}^{1-p}\|v\|_{L^{2}} dt
\end{equation*}
and by Gronwall's Lemma, if $p>2$ we get
\begin{equation*}
  \|v\|_{L^{\infty}L^{2}}=
  \|u_{t}\|_{L^{\infty}L^{2}}
  \le
  C
  \|u_{t}(0)\|_{L^{2}}
\end{equation*}
with $C=C(\|u_{0}\|_{H^{1}}+\|xu_{0}\|_{L^{2}})$;
since $|u_{t}(0)|\le|\Delta u_{0}|+|u_{0}|^{p}$ we have
\begin{equation*}
  \|u_{t}(0)\|_{L^{2}}\le
  \|u_{0}\|_{H^{2}}+\|u_{0}\|_{L^{\infty}}^{p-1}\|u_{0}\|_{L^{2}}
  \le C(\|u_{0}\|_{H^{2}})
\end{equation*}
which gives the estimate
\begin{equation*}
  \|u_{t}\|_{L^{\infty}L^{2}}\le C(\|u_{0}\|_{H^{2}}).
\end{equation*}
Using the equation for $u$ we can estimate also
\begin{equation*}
  \|\Delta u\|_{L^{\infty}L^{2}}\le
  \|u_{t}\|_{L^{\infty}L^{2}}+
  \||u|^{p}\|_{L^{\infty}L^{2}}
  \le  C(\|u_{0}\|_{H^{2}})+
  \|u\|_{L^{\infty}L^{\infty}}^{p-1}\|u_{0}\|_{L^{2}}
  \le C(\|u_{0}\|_{H^{2}}).
\end{equation*}
By elliptic regularity we have then
\begin{equation*}
  \|u\|_{L^{\infty}H^{2}}\le C(\|u_{0}\|_{H^{2}})
\end{equation*}
and summing up we have proved
\begin{equation}\label{eq:firstder}
  \|u\|_{X^{\infty,2;1}}\le C(\|u_{0}\|_{H^{2}}).
\end{equation}
For higher order derivatives we proceed in a similar
way by induction.
Applying $\partial_{t}^{j}$ to the equation we have
\begin{equation*}
  i(\partial^{j}_{t}u)_{t}+
  \Delta(\partial^{j}_{t}u)=
  \partial^{j}_{t}(|u|^{p-1}u).
\end{equation*}
The right hand side satisfies
\begin{equation*}
  |\partial^{j}_{t}(|u|^{p-1}u)|
  \lesssim
  \bra{t}^{1-p}|\partial^{j}_{t}u|+
  L.O.T.
\end{equation*}
where the lower order terms are products of derivatives
$\partial^{h}_{t}u$ with $h<j$, which are bounded by
the induction hypothesis (recall that $u$ is radial in $x$,
hence $L^{\infty}$ norms are bounded by $L^{2}$ norms of
the gradient),
times a power of $u$ of order at least $p-j$, which
decays like $\bra{t}^{j-p}$ and is integrable provided
$p>j+1$. This implies
\begin{equation*}
  \textstyle
  \|L.O.T.\|_{L^{1}_{T}L^{2}}\lesssim
  \int_{0}^{T}\bra{t}^{p-j}C(\|u_{0}\|_{H^{2j}})dt
  \lesssim C(\|u_{0}\|_{H^{2j}})
\end{equation*}
and hence
\begin{equation*}
  \textstyle
  \|\partial^{j}_{t}u\|_{L^{\infty}_{T}L^{2}}
  \le \|\partial^{j}_{t}u(0)\|_{L^{2}}+
  \int_{0}^{T}\|\partial^{j}_{t}(|u|^{p-1}u)\|_{L^{2}}dt
\end{equation*}
\begin{equation*}
  \textstyle
  \lesssim
  \|\partial^{j}_{t}u(0)\|_{L^{2}}+
  C(\|u_{0}\|_{H^{2j}})+
  \|\bra{t}^{1-p}\partial^{j}_{t}u\|_{L^{1}_{T}L^{2}}.
\end{equation*}
Thus again by Gronwall's Lemma, if $p>j+1$, we have
\begin{equation}\label{eq:derj}
  \|\partial^{j}_{t}u\|_{L^{\infty}L^{2}}
  \le C(\|u_{0}\|_{H^{2j}}).
\end{equation}
Using the equation, this shows that if $p>N+1$ one has
\begin{equation*}
  \|\partial^{N}_{t}u\|_{L^{\infty}L^{2}}+
  \|\partial^{N-1}_{t}\Delta u\|_{L^{\infty}L^{2}}
  \le  C(\|u_{0}\|_{H^{2j}})
\end{equation*}
and by elliptic regularity we have also
\begin{equation*}
  \textstyle
  \|\partial^{N}_{t}u\|_{L^{\infty}L^{2}}+
  \|\partial^{N-1}_{t} u\|_{L^{\infty}H^{2}}
  \le  C(\|u_{0}\|_{H^{2j}})
\end{equation*}
provided $p>N+1$.
Derivatives $\partial^{j}_{t}\partial^{\alpha}_{x}u$
can be estimated using the equation and elliptic regularity 
by the usual recursive procedure;
this requires to estimate
$\Delta^{j}(|u|^{p-1}u)$ with $j=1,\dots,N-1$, 
and in order to get an integrable factor $\bra{t}^{p-2j}$
we must assume $p-2(N-1)>1$ i.e. $p>2N-1$.
We finally arrive at the estimate
\begin{equation}\label{eq:noninfl}
  \|u\|_{X^{\infty,2;N}}\le C(\|u_{0}\|_{H^{2N}})
\end{equation}
provided $p>2N-1$.

\section{Proof of Theorem \ref{the:main}}\label{sec:main}

Denote by $u(t,x)$ the global radial solution with 
$u(0,x)=u_{0}$
constructed in
Propositions \ref{pro:globalrad}, \ref{pro:decayrad}
and Corollary \ref{cor:regul},
and let $v(t,x)$ be the local solution with $v(0,x)=v_{0}$
given by Proposition \ref{pro:local}.
Then $w=v-u$ satisfies the equation
\begin{equation*}
  iw_{t}+\Delta w=|u+v|^{p-1}(u+v)-|u|^{p-1}u
\end{equation*}
which can be written in the form
\begin{equation}\label{eq:diffeq}
  iw_{t}+\Delta w-V(t,x)w=w^{2}\cdot F[u,w],
  \qquad
  w(0)=w_{0}:=v_{0}-u_{0},
  \qquad
  w\vert_{\partial \Omega}=0
\end{equation}
where
\begin{equation*}
  V(t,x)=p|u|^{p-1},
  \qquad
  \textstyle
  F[u,w]=
  p(p-1)
  \int_{0}^{1}|u+\sigma w|^{p-3}(u+\sigma w)(1-\sigma)d \sigma.
\end{equation*}
We have
\begin{equation*}
  \textstyle
  \|V\|_{X_{T}^{1,\infty;m}}\lesssim
  \sum_{2j+|\alpha|\le 2m}
  \|\partial^{j}_{t}\partial^{\alpha}|u|^{p-1}\|
    _{L^{1}_{T}L^{\infty}}
\end{equation*}
and if $p-1>2m$ we can write
\begin{equation*}
  \|\partial^{j}_{t}\partial^{\alpha}|u|^{p-1}\|
    _{L^{1}_{T}L^{\infty}}
  \lesssim
  \textstyle
  \sum
  \int_{0}^{T}
  \|u\|_{L^{\infty}}^{p-1-\nu}
  \|\partial^{j_{1}}_{t}\partial^{\alpha_{1}}u\|_{L^{\infty}}
  \dots
  \|\partial^{j_{\nu}}_{t}\partial^{\alpha_{\nu}}u\|_{L^{\infty}}
  dt
\end{equation*}
\begin{equation*}
  \lesssim
  \textstyle
  \sum
  \|u\|_{X^{\infty,\infty;m}}^{p-1-\nu}
  \cdot
  \int_{0}^{T}
  \|u\|_{L^{\infty}}^{p-1-\nu}dt
\end{equation*}
where the sum is extended over $j_{1}+\dots +j_{\nu}=j$,
$\alpha_{1}+\dots +\alpha_{\nu}=\alpha$ and
$\nu\le j+|\alpha|$, so that $\nu\le 2m$.
By the decay estimate \eqref{eq:decayrad} 
we have $\|u\|_{L^{\infty}}\lesssim \bra{t}^{-1}$
hence the last integral is convergent provided
$p>2m+2$. By Sobolev embedding and the bound \eqref{eq:boundsHN}
we get
\begin{equation*}
  \textstyle
  \|u\|_{X^{\infty,\infty;m}_{T}}\lesssim
  \|u\|_{X^{\infty,2;m+N_{2}}_{T}}\lesssim
  C(\|u_{0}\|_{H^{2(m+N_{2})}},\|xu_{0}\|_{L^{2}}),
  \qquad
  N_{2}=\lfloor  \frac n2  \rfloor+1
\end{equation*}
and in conclusion we have proved
\begin{equation}\label{eq:estV}
  \|V\|_{X^{1,\infty;m}}
  \lesssim
  C(\|u_{0}\|_{H^{2(m+N_{2})}},\|xu_{0}\|_{L^{2}})
  <\infty
\end{equation}
provided $p>2m+2$. 
Thus we are in position to apply Theorem \ref{the:enerV} and
Proposition \ref{pro:strichV}, 
and we get that the linear equation
\begin{equation*}
  iw_{t}+\Delta w-V(t,x)w=F(t,x)
\end{equation*}
satisfies for all admissible non endpoint $(q,r)$
and all $m\ge1$ the perturbed energy--Strichartz estimates
\begin{equation}\label{eq:enstrV}
  \|w\|_{X^{\infty,2;m}_{T}}
  +
  \|w\|_{X^{q,r;m}_{T}}
  \lesssim \|w_{0}\|_{H^{2m}}+\|F\|_{X^{1,2;m}_{T}}
\end{equation}
provided $p>2m+2$, $u_{0}\in H^{2(m+N_{2})}$
and compatibility conditions of suitable order are satisfied.
The implicit constant in \eqref{eq:enstrV} depends on
$\|u_{0}\|_{H^{1}}+\|xu_{0}\|_{L^{2}}$ but not on $T$.

We now apply \eqref{eq:enstrV} to the equation
\eqref{eq:diffeq}. For an admissible couple $(q,r)$
and an integer $m$ to be chosen we write
\begin{equation*}
  M_{m}(T)=  \|w\|_{X^{\infty,2;m}_{T}}
  +
  \|w\|_{X^{q,r;m}_{T}}
\end{equation*}
and by \eqref{eq:enstrV} we have
\begin{equation}\label{eq:basic}
  M_{m}(T)\lesssim\|w_{0}\|_{H^{2m}}+
  \|w^{2}F[u,w]\|_{X^{1,2;m}_{T}}.
\end{equation}
We must estimate
\begin{equation*}
  \textstyle
  \|w^{2}F[u,w]\|_{X^{1,2;m}_{T}}=
  \sum_{2j+|\alpha|\le 2m}
  \|\partial^{j}_{t}\partial^{\alpha}_{x}(w^{2}F[u,w])\|
  _{L^{1}_{T}L^{2}}.
\end{equation*}
The derivative can be expanded as a finite sum
\begin{equation*}
  \textstyle
  \partial^{j}_{t}\partial^{\alpha}_{x}(w^{2}F[u,w])=
  \sum\int_{0}^{1}
  G \cdot W_{1}W_{2} U_{1}\cdot \dots \cdot U_{\nu}d \sigma
\end{equation*}
where $0\le \nu\le 2m$ and
\begin{itemize}
  \item 
  $W_{1}=\partial^{h_{1}}_{t}\partial^{\alpha_{1}}_{x}w$
  and
  $W_{2}=\partial^{h_{2}}_{t}\partial^{\alpha_{2}}_{x}w$
  \item $U_{k}=
    \partial^{j_{k}}_{t}\partial^{\beta_{k}}_{x}(u+\sigma w)$
  \item $h_{1}+h_{2}+j_{1}+\dots +j_{\nu}=j$,
  $\alpha_{1}+\alpha_{2}+\beta_{1}+\dots +\beta_{\nu}=\alpha$
  \item it is not restrictive to assume that
  $2j_{\nu}+|\beta_{\nu}|\ge 2j_{k}+|\beta_{k}|$ for
  all $k$ and that $2h_{2}+|\alpha_{2}|\ge 2h_{1}+|\alpha_{1}|$
  \item 
  $G$ satisfies $|G|\lesssim|u+\sigma w|^{p-\nu-2}$
  so that 
  $\|G\|_{L^{\infty}}\lesssim
    (\|u\|_{L^{\infty}}+\|w\|_{L^{\infty}})^{p-\nu-2}$.
\end{itemize}
We take the $L^{2}$ norm in $x$ of each product.
We consider two cases.

\textsc{First case:}
$2j_{\nu}+|\beta_{\nu}|\ge 2h_{2}+|\alpha_{2}|$.
Then we write
\begin{equation*}
\begin{split}
  \|GW_{1}W_{2}U_{1}\dots U_{\nu}\|_{L^{2}}
  \le
  &
  \|G\|_{L^{\infty}}
  \|W_{1}\|_{L^{\infty}}
  \|W_{2}\|_{L^{\infty}}
  \|U_{1}\|_{L^{\infty}}
  \dots
  \|U_{\nu-1}\|_{L^{\infty}}
  \|U_{\nu}\|_{L^{2}}
  \\
  \lesssim
  \|G\|_{L^{\infty}}
  &
  \|W_{1}\|_{W^{N_{r},r}}
  \|W_{2}\|_{W^{N_{r},r}}
  \|U_{1}\|_{H^{N_{2}}}
  \dots
  \|U_{\nu-1}\|_{H^{N_{2}}}
  \|U_{\nu}\|_{L^{2}}
\end{split}
\end{equation*}
by Sobolev embedding, where
\begin{equation*}
  \textstyle
  N_{r}=\lfloor \frac nr \rfloor+1,
  \qquad
  N_{2}=\lfloor \frac n2 \rfloor+1;
\end{equation*}
note that since the maximal order of derivation is 
$2j_{\nu}+|\beta_{\nu}|\le 2m$, we have
$2j_{k}+|\beta_{k}|\le m$ for $k<\nu$
and $2h_{i}+|\alpha_{i}|\le m$, $i=1,2$,
so that, using \eqref{eq:boundsHN}, we have
\begin{equation*}
  \|U_{i}\|_{L^{\infty}_{T}H^{N_{2}}}\le
  \|u\|_{X^{\infty,2;(m+N_{2})/2}}+
  \|w\|_{X^{\infty,2;(m+N_{2})/2}}\le
  \|u\|_{X^{\infty,2;m}}+\|w\|_{X^{\infty,2;m}}
\end{equation*}
\begin{equation*}
  \le
  C_{m}+M_{m}(T)
\end{equation*}
provided $m\ge N_{2}$, where 
$C_{m}=C(\|u_{0}\|_{H^{2m}},\|xu_{0}\|_{L^{2}})$.
Moreover we can estimate
\begin{equation*}
  \|G\|_{L^{\infty}}\lesssim
  (\|u\|_{L^{\infty}}+\|w\|_{L^{\infty}})^{p-\nu-2}
  \lesssim
  (\bra{t}^{-1}+\|w\|_{L^{\infty}})^{p-\nu-2}.
\end{equation*}
This gives, for $t\in[0,T]$,
\begin{equation*}
  \|GW_{1}W_{2}U_{1}\dots U_{\nu}\|_{L^{2}}\le
  (C_{m}+M_{m}(T))^{\nu}
  (\bra{t}^{-1}+\|w\|_{L^{\infty}})^{p-\nu-2}
  \|W_{1}\|_{W^{N_{r},r}}
  \|W_{2}\|_{W^{N_{r},r}}.
\end{equation*}
We now take the $L^{1}$ norm in $t\in[0,T]$.
Since $2h_{i}+|\alpha_{i}|\le m$, $i=1,2$,
we can write
\begin{equation*}
  \textstyle
  \int_{0}^{T}
  \bra{t}^{2+\nu-p}
  \|W_{1}\|_{W^{N_{r},r}}
  \|W_{2}\|_{W^{N_{r},r}}dt
  \le
  \|\bra{t}^{2+\nu-p}\|_{L^{\frac{q}{q-2}}}
  \|w\|_{X_{T}^{q,r;(m+N_{r})/2}}^{2}
\end{equation*}
and if $m\ge N_{r}$ (which is implied by $m\ge N_{2}$)
this gives
\begin{equation*}
  \le \|\bra{t}^{2+\nu-p}\|_{L^{\frac{q}{q-2}}}
  M_{m}(T)^{2}.
\end{equation*}
We choose $q\in(2,\infty)$ such that $(q,r)$ is admissible,
i.e. $q=\frac{4r}{n(r-2)}$. Since $\nu\le 2m$,
we see that
\begin{equation*}
  \textstyle
  \|\bra{t}^{2+\nu-p}\|_{L^{\frac{q}{q-2}}}\le
  \|\bra{t}^{2+2m-p}\|_{L^{\frac{q}{q-2}}}<\infty
  \quad\text{provided}\quad 
  p>2m+3-n \frac{r-2}{2r}
\end{equation*}
and in this case
\begin{equation}\label{eq:Istr}
  \textstyle
  \int_{0}^{T}
  \bra{t}^{2+\nu-p}
  \|W_{1}\|_{W^{N_{r},r}}
  \|W_{2}\|_{W^{N_{r},r}}dt
  \lesssim M_{m}(T)^{2}.
\end{equation}
With a similar computation we can write
\begin{equation*}
  \textstyle
  \int_{0}^{T}
  \|w\|_{L^{\infty}}^{p-\nu-2}
  \|W_{1}\|_{W^{N_{r},r}}
  \|W_{2}\|_{W^{N_{r},r}}dt
  \lesssim
  \left\|\|w\|_{L^{\infty}}^{p-\nu-2}\right\|
  _{L_{T}^{\frac{q}{q-2}}}
  M_{m}(T)^{2}
\end{equation*}
and
\begin{equation*}
  \left\|\|w\|_{L^{\infty}}^{p-\nu-2}\right\|
  _{L_{T}^{\frac{q}{q-2}}}
  \le
  \|w\|_{L^{\infty}_{T}L^{\infty}}^{p-q-\nu}
  \|w\|_{L^{q}_{T}L^{\infty}}^{q-2}
  \lesssim
  \|w\|_{L^{\infty}_{T}H^{N_{2}}}^{p-q-\nu}
  \|w\|_{L^{q}_{T}W^{N_{r},r}}^{q-2}
  \lesssim
  M_{m}(T)^{p-q-\nu}
\end{equation*}
provided $p>2m+q$. In conclusion we have
\begin{equation}\label{eq:IIstr}
  \textstyle
  \int_{0}^{T}
  \|w\|_{L^{\infty}}^{p-\nu-2}
  \|W_{1}\|_{W^{N_{r},r}}
  \|W_{2}\|_{W^{N_{r},r}}dt
  \lesssim
  M_{m}(T)^{p-q-\nu}.
\end{equation}
Combining \eqref{eq:Istr}, \eqref{eq:IIstr} we conclude
\begin{equation}\label{eq:IIIstr}
  \|GW_{1}W_{2}U_{1}\dots U_{\nu}\|_{L^{2}}
  \lesssim
  M_{m}(T)^{2}+M_{m}(T)^{p}
\end{equation}
provided $p>2m+q+2$ (so that $p-q-\nu>2$).

\textsc{Second case:} 
$2j_{\nu}+|\beta_{\nu}|< 2h_{2}+|\alpha_{2}|$.
Then we estimate the product as follows
\begin{equation*}
\begin{split}
  \|GW_{1}W_{2}U_{1}\dots U_{\nu}\|_{L^{2}}
  \le
  &
  \|G\|_{L^{\infty}}
  \|W_{1}\|_{L^{\infty}}
  \|W_{2}\|_{L^{2}}
  \|U_{1}\|_{L^{\infty}}
  \dots
  \|U_{\nu-1}\|_{L^{\infty}}
  \|U_{\nu}\|_{L^{\infty}}
  \\
  \lesssim
  &
  \|G\|_{L^{\infty}}
  \|W_{1}\|_{W^{N_{r},r}}
  \|W_{2}\|_{L^{2}}
  \|U_{1}\|_{H^{N_{2}}}
  \dots
  \|U_{\nu-1}\|_{H^{N_{2}}}
  \|U_{\nu}\|_{H^{N_{2}}}
\end{split}
\end{equation*}
and in this case the maximal order of derivation is
$2h_{2}+|\alpha_{2}|$ so that we have 
$2j_{k}+|\beta_{k}|\le m$ for all $k$
and $2h_{1}+|\alpha_{1}|\le m$.
Proceeding in a similar way as in the first case, we get
again \eqref{eq:IIIstr}.

Summing up, and recalling \eqref{eq:basic}, we have
proved
\begin{equation}\label{eq:lstr}
  M_{m}(T)\lesssim\|w_{0}\|_{H^{2m}}+
  M_{m}(T)^{2}+M_{m}(T)^{p}
\end{equation}
with an implicit constant depending on
$\|u_{0}\|_{H^{2m}}+\|xu_{0}\|_{L^{2}}$, provided
\begin{equation}\label{eq:condp}
  \textstyle
  m=\lfloor  \frac n2 \rfloor+1,
  \qquad
  p>2m+q+2
\end{equation}
and $(q,r)$ is admissible with $q\in(2,\infty)$.
We can take $q>2$ arbitrarily close to 2, so that
it is sufficient to assume
\begin{equation*}
  p>n+6
\end{equation*}
to achieve \eqref{eq:condp}.
Finally, a standard continuation argument shows that
if $\|w_{0}\|_{H^{2m}}$ is sufficiently small
with respect to $\|u_{0}\|_{H^{2m}}+\|xu_{0}\|_{L^{2}}$,
the (maximal) local solution $w$ to the equation
\eqref{eq:diffeq} can be continued for all times, and
this proves the claim.

\section{Weak--strong uniqueness}\label{sec:weak_stro_uniq}

We recall the definition of the energy $E(u(t))$
\begin{equation*}
  E(u(t))=E(u)=\frac12\int_{\Omega}|\nabla_{x}u|^{2}dx+
  \frac{1}{p+1}\int_{\Omega}|u|^{p+1}dx
\end{equation*}
of a solution $u(t,x)$ to the Cauchy problem
\begin{equation}\label{eq:pertpb2}
  iu_{t}+\Delta u=|u|^{p-1}u,
  \qquad
  u(0,x)=u_{0},
  \qquad
  u(t,\cdot)\vert_{\partial \Omega}=0.
\end{equation}
Following \cite{Struwe06}, we prove a general stability result
for local solutions of \eqref{eq:pertpb2}, from which the
uniqueness Theorem \ref{the:uniqueness} follows
immediately.

\begin{theorem}[]\label{the:stabil}
  Let $I$ be an open interval containing $[0,T]$, $T>0$.
  Let $u,v$ be two distributional solutions to \eqref{eq:pertpb2} 
  on $I \times \Omega$ such that
  \begin{equation*}
    u\in 
    C(I;H^{2}(\Omega))\cap
    C^{1}(I;H^{1}_{0}(\Omega))\cap
    C^{2}(I;L^{2}(\Omega))\cap
    L^{\infty}(I \times \Omega),
    \quad
    \Delta u\in C(I;H^{1}_{0}(\Omega)),
  \end{equation*}
  \begin{equation*}
    v\in 
    C(I;H^{1}_{0}(\Omega))\cap
    C^{1}(I;L^{2}(\Omega)).
  \end{equation*}
  Assume in addition that $v$ satisfies an energy inequality
  \begin{equation*}
    E(v(t))\le E(v(0)).
  \end{equation*}
  Then the difference $w=v-u$ satisfies the energy estimate
  \begin{equation*}
    E(w(t))\le C e^{Ct}(E(w(0))+\|w(0)\|_{L^{2}(\Omega)}^{2}),
    \qquad
    t\in[0,T]
  \end{equation*}
  where $C$ is a constant depending on
  \begin{equation}\label{eq:const}
    C=C(p,T,\|u\|_{L^{\infty}([0,T]\times \Omega)}).
  \end{equation}
\end{theorem}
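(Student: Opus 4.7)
My plan is a Struwe-type weak--strong uniqueness argument, exploiting the regularity of $u$ to test against the weak equation for $v$. Setting $w := v-u$, an algebraic expansion of $E(v)=E(u+w)$ gives
\begin{equation*}
E(w) = E(v)-E(u) - \Re\langle \nabla u, \nabla w\rangle - \int_{\Omega} R(u,w)\,dx,
\qquad R(u,w) := \tfrac{1}{p+1}\bigl(|u+w|^{p+1}-|u|^{p+1}-|w|^{p+1}\bigr),
\end{equation*}
and a Taylor expansion combined with $\|u\|_{L^\infty([0,T]\times\Omega)}\le M$ gives the pointwise bound $|R(u,w)|\lesssim_M |u|(|u|+|w|)^{p-1}|w|$, hence $|\int R\,dx|\lesssim_M \|w\|_{L^2}^2+E(w)$. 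The smooth solution $u$ satisfies classical energy conservation $E(u(t))=E(u(0))$ by the stated regularity, and by hypothesis $E(v(t))\le E(v(0))$. Subtracting the expansion at times $0$ and $t$ produces
\begin{equation*}
E(w(t)) - E(w(0)) \le \mathcal{J}(0)-\mathcal{J}(t),
\qquad \mathcal{J}(s) := \Re\langle \nabla u(s), \nabla w(s)\rangle + \int_\Omega R(u(s),w(s))\,dx.
\end{equation*}

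The key step is to control $\mathcal{J}(t)-\mathcal{J}(0)$ by differentiating in $s$ and using both equations. One computes $\frac{d}{ds}\mathcal{J}(s)$ and substitutes $u_s$ via $iu_s+\Delta u = f(u)$ and $w_s = v_s-u_s$ via the difference of the two equations (the $v$-part interpreted distributionally, tested against smooth functions built from $u$). The resulting expression contains a potentially dangerous pairing $\Re\int Q(u,w)\bar w_s\,dx$ with $Q:=f(u+w)-f(u)-f(w)$, which is rewritten using $iw_s+\Delta w = f(u+w)-f(u)$ so that $\bar w_s$ is replaced by $\Delta\bar w$ plus bounded expressions in $u,w$. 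Integration by parts moving all spatial derivatives of $w$ onto the smooth $u$ eliminates $\Delta\bar w$, and after cancelling with terms arising from $\frac{d}{ds}\Re\langle\nabla u,\nabla w\rangle$ one is left with an expression of the schematic form
\begin{equation*}
\tfrac{d}{ds}\mathcal{J}(s) = \int_\Omega \bigl(\text{bounded function of } u, u_s, u_{ss}, f(u)\bigr)\cdot P_2(w,\nabla w)\,dx,
\end{equation*}
with $P_2$ quadratic in $(w,\nabla w)$ (higher-order-in-$w$ contributions are controlled via $\|u\|_\infty\le M$ and $\int|w|^{p+1}\lesssim E(w)$). This yields $|\frac{d}{ds}\mathcal{J}(s)|\le C_M(E(w(s))+\|w(s)\|_{L^2}^2)$.

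The $L^2$ piece is simpler: testing the $w$-equation against $w$ itself (rigorous since $w\in C(I;H^1_0)\cap C^1(I;L^2)$) gives $\frac{d}{ds}\|w\|_{L^2}^2 = -2\,\mathrm{Im}\int(f(v)-f(u))\bar w\,dx$, and the bound $|f(v)-f(u)|\lesssim (|u|+|v|)^{p-1}|w|$ combined with $\|u\|_\infty\le M$ and $\int|w|^{p+1}\lesssim E(w)$ yields $|\frac{d}{ds}\|w(s)\|_{L^2}^2|\le C_M(\|w(s)\|_{L^2}^2+E(w(s)))$. Combining with the above and integrating in $s$ produces
\begin{equation*}
E(w(t))+\|w(t)\|_{L^2}^2\le C\bigl(E(w(0))+\|w(0)\|_{L^2}^2\bigr) + C\int_0^t\bigl(E(w(s))+\|w(s)\|_{L^2}^2\bigr)ds,
\end{equation*}
and Gronwall's inequality delivers the theorem.

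The main obstacle is the rigorous justification of the time differentiation of $\mathcal{J}$ under the low regularity of $v$: since $\Delta v$ exists only as a distribution, every spatial pairing with $w$ or $w_s$ must be realized with derivatives moved onto $u$. Two structural features enable this: the algebraic identity $Q(0,w)=0$ provides a factor of $u$ that keeps $Q$ small, ensuring integrability; and the pairing of $\bar w_s\in L^2$ against the smooth test objects $\Delta u$ and $f(u)$ is legitimate because $u\in C^1(I;H^1_0)$ with $\Delta u\in C(I;H^1_0)$ and $f(u)\in C(I;H^1)$ from the $L^\infty$ bound together with $u\in C(I;H^2)$. These ingredients combine as in \cite{Struwe06} to make every step of the formal computation rigorous.
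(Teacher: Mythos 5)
Your overall architecture (expand $E(v)-E(u)$ around $u$, use $E(u(t))=E(u(0))$ and the energy inequality for $v$, control the cross terms by pairing the two equations, then Gronwall) is the same as the paper's, but the decomposition you chose breaks the key step. You place the \emph{entire} nonlinear remainder $R(u,w)=\tfrac{1}{p+1}(|u+w|^{p+1}-|u|^{p+1}-|w|^{p+1})$ inside the quantity $\mathcal{J}$ that you then differentiate in time. This causes two problems. First, $\frac{d}{ds}\int R\,dx$ contains the pairing $\Re\int(|v|^{p-1}v-|w|^{p-1}w)\overline{w}_s\,dx$; making sense of it (indeed, of $\frac{d}{ds}\int|v|^{p+1}dx$ at all) requires $|v|^{p-1}v\in L^{2}$, i.e.\ $v\in L^{2p}$, which is precisely what the hypotheses $v\in C(I;H^{1}_{0})\cap C^{1}(I;L^{2})$ do \emph{not} provide in the supercritical range. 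Second, your proposed cure --- replace $\bar w_s$ by $i\bar F-i\Delta\bar w$ and integrate by parts --- produces a term of the form $\Re\int i\,\nabla Q\cdot\nabla\bar w\,dx$ with $Q=f(u+w)-f(u)-f(w)$; since $f(z)=|z|^{p-1}z$ is not holomorphic, $\nabla Q$ contains a $\nabla\bar w$ component whose coefficient $\partial_{\bar z}f(u+w)-\partial_{\bar z}f(w)$ is genuinely complex (so the $\Re(i\,\cdot)$ structure does not annihilate it) and is only $O_{M}(1+|w|^{p-2})$. The resulting contribution $\int|w|^{p-2}|\nabla w|^{2}dx$ is not controlled by $E(w)+\|w\|_{L^{2}}^{2}$, so your claim that the outcome is ``quadratic in $(w,\nabla w)$ with bounded coefficients'' does not hold as stated.

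The paper avoids both problems by a sharper splitting: $E(v)-E(u)=A(t)+B(t)$, where $B(t)=\Re\int(\nabla u\cdot\nabla\bar w+|u|^{p-1}u\bar w)\,dx$ collects only the terms \emph{linear} in $w$, while $A(t)$ keeps $\tfrac12\int|\nabla w|^{2}$ together with the quadratic-and-higher part of the potential difference. Only $B$ is differentiated in time, via the regularized identity \eqref{eq:distrib}; there $\bar w_t$ appears paired solely against the smooth function $G=|u|^{p-1}u$, and the algebraic combination $\Re\int(\bar w_t|u|^{p-1}u-\partial_t(|u|^{p-1}u\bar w))=-\Re\int\bar w\,\partial_t(|u|^{p-1}u)$ removes $w_t$ without ever invoking $\Delta w$, leaving only terms with no derivatives of $w$ that are bounded by $\int(|w|^{2}+|w|^{p+1})\lesssim \|w\|_{L^2}^2+E(w)$. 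The quantity $A$, which carries all the supercritical powers of $w$, is never differentiated in time: it is bounded below at time $t$ and above at time $0$ using the pointwise convexity of $z\mapsto|z|^{p+1}$, giving $A(t)\ge \tfrac{1}{p2^{p}}E(w(t))-C\|w(t)\|_{L^{2}}^{2}$ and $A(0)\le CE(w(0))+C\|w(0)\|_{L^{2}}^{2}$. Your $L^{2}$ estimate and the final Gronwall step are fine; if you rework the cross-term analysis with this linear/higher-order splitting, the proof closes.
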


To prove Theorem \ref{the:stabil}, consider the
difference $w=v-u$, which satisfies the equation
\begin{equation*}
  iw_{t}+\Delta w=|u+w|^{p-1}(u+w)-|u|^{p-1}u.
\end{equation*}
We prepare an estimate for the $L^{2}$ norm of $w$.
Using the multiplier $i \overline{w}$ we get
\begin{equation*}
\begin{split}
  \textstyle
  \partial_{t}\|w(t)\|_{L^{2}}^{2}=
  &
  \textstyle
  2\Im\int_{\omega}(|u+w|^{p-1}(u+w)-|u|^{p-1}u)\overline{w}dx
  \\
  \le & \textstyle
  C(\|u\|_{L^{\infty}_{I}L^{\infty}})\cdot
  \int_{\Omega}(|w|^{2}+|w|^{p+1})dx
  \\
  \le & \textstyle
  C[\|w\|_{L^{2}}^{2}+E(w(t))]
\end{split}
\end{equation*}
and by Gronwall's Lemma
\begin{equation}\label{eq:L2estb}
  \textstyle
  \|w(t)\|_{L^{2}}^{2}\le
  C\|w(0)\|_{L^{2}}^{2}
  +C\int_{0}^{t}e^{C(t-s)}E(w(s))ds,
  \qquad t\in [0,T]
\end{equation}
with $C=C(T,\|u\|_{L^{\infty}_{T}L^{\infty}})$.

Next, we split
\begin{equation*}
  E(v)=E(u)+A(t)+B(t)
\end{equation*}
where
\begin{equation*}
  \textstyle
  A(t)=
  \frac 12
  \int |\nabla_{x}w|^{2}dx
  +
  \int (\frac{|u+w|^{p+1}-|u|^{p+1}}{p+1}-|u|^{p-1}
  \Re(u \overline{w}))dx
\end{equation*}
\begin{equation*}
  \textstyle
  B(t)=
  \Re
  \int(\nabla u \cdot\nabla \overline{w}+|u|^{p-1}
  u \overline{w})dx.
\end{equation*}
Since $E(u)=E(u(0))$ and $E(v)\le E(v(0))$ we have
\begin{equation}\label{eq:finalg}
  0\le E(v(0))-E(v(t))=A(0)-A(t)+B(0)-B(t).
\end{equation}
Writing
\begin{equation*}
  \phi(t)=\frac{|t|^{\frac{p+1}{2}}}{p+1}
  \quad\text{so that}\quad 
  \frac{|u|^{p+1}}{p+1}=\phi(|u|^{2}),
\end{equation*}
we see that
\begin{equation*}
  \textstyle
  \partial_{\sigma}\phi(|u+\sigma w|^{2})=
  \phi'(|u+\sigma w|^{2})2\Re((u+\sigma w)\overline{w})=
  \Re(|u+\sigma w|^{p-1}(u+\sigma w)\overline{w}),
\end{equation*}
\begin{equation*}
\begin{split}
  \partial^{2}_{\sigma}\phi(|u+\sigma w|^{2})
  =&
  (p-1)|u+\sigma w|^{p-3}\Re((u+\sigma w)\overline{w})^{2}+
  |u+\sigma w|^{p-1}|w|^{2}
  \\
  \ge&
  |u+\sigma w|^{p-1}|w|^{2}
  \ge
  2^{2-p}\sigma^{p-1}|w|^{p+1}-|u|^{p-1}|w|^{2}
\end{split}
\end{equation*}
(since $p\ge3$). We get easily
\begin{equation*}
  \textstyle
  \frac{|u+w|^{p+1}-|u|^{p+1}}{p+1}-|u|^{p-1}\Re(u \overline{w})
  =
  \int_{0}^{1}\int_{0}^{\sigma}
  \partial^{2}_{\tau}\phi(|u+\tau w|^{2})
  d \tau d \sigma
\end{equation*}
\begin{equation*}
  \ge \textstyle
  \frac{2^{2-p}}{p(p+1)}|w|^{p+1}-\frac 12|u|^{p-1}|w|^{2}
\end{equation*}
which implies
\begin{equation*}
  \textstyle
  A(t)\ge \frac{1}{p2^{p}} E(w(t))-C\|w\|_{L^{2}}^{2},
  \qquad
  C=\frac 12\|u\|_{L^{\infty}_{T}L^{\infty}}^{p-1}.
\end{equation*}
Recalling \eqref{eq:L2estb}, this gives
for $t\in [0,T]$
\begin{equation*}
  \textstyle
  A(t)\ge \frac{1}{p2^{p}}E(w(t))-
  C\int_{0}^{t}E(w(s))ds
  -C\|w(0)\|_{L^{2}}^{2}
\end{equation*}
for some $C=C(T,\|u\|_{L^{\infty}_{T}L^{\infty}})$.
On the other hand
\begin{equation*}
  \textstyle
  \frac{|u+w|^{p+1}-|u|^{p+1}}{p+1}-|u|^{p-1}\Re(u \overline{w})
  \le
  C(\|u\|_{L^{\infty}_{T}L^{\infty}})(|w|^{p+1}+|w|^{2})
\end{equation*}
which implies
\begin{equation*}
  A(0)\le CE(w(0))+C\|w(0)\|_{L^{2}}^{2}
\end{equation*}
and in conclusion
\begin{equation}\label{eq:partial1}
  \textstyle
  A(0)-A(t)\le
  -\frac{1}{p2^{p}}E(w(t))+
      C\int_{0}^{t}E(w(s))ds
      +C\|w(0)\|_{L^{2}}^{2}
\end{equation}
with $C=C(T,\|u\|_{L^{\infty}_{T}L^{\infty}})$.

In order to estimate $B(t)$, we first remark the following.
If $W(t,x)$, $U(t,x)$ satisfy
\begin{equation*}
  iW_{t}+\Delta W=F,
  \qquad
  iU_{t}+\Delta U=G
\end{equation*}
with Dirichled boundary conditions,
then for any $\chi(t)\in C^{\infty}_{c}((0,T))$ we have formally
\begin{equation}\label{eq:distrib}
  \textstyle
  \iint_{\Omega} \chi'(t)(\nabla U \cdot \nabla \overline{W})dxdt=
  \iint_{\Omega} \chi(t)
  (U_{t}\overline{F}+\overline{W}_{t}G)dxdt
\end{equation}
Identity \eqref{eq:distrib} is obvious for smooth 
$U,W$, by integration by parts.
By approximation, \eqref{eq:distrib} holds also if
$W$ is a solution of $iW_{t}+\Delta W=F$ in 
$\mathscr{D}'((0,T)\times \Omega)$, with
\begin{equation*}
  W\in  C^{1}([0,T];H^{-1}(\Omega))\cap C([0,T];H^{1}_{0}(\Omega)),
\end{equation*}
so that $F\in C([0,T];H^{-1}(\Omega))$, and
\begin{equation*}
  U\in C^{1}([0,T];H^{1}_{0}(\Omega))\cap C([0,T];H^{2}(\Omega))
  \quad\text{with}\quad 
  \Delta U\in C([0,T];H^{1}_{0}(\Omega))
\end{equation*}
so that $G\in C([0,T];H^{1}_{0}(\Omega))$).
Consider now a sequence of test functions 
$\chi_{k}(t)\in C^{\infty}_{c}((0,T))$, non negative,
such that $\chi_{k}\uparrow\one{[0,t]}$ pointwise, 
$t\in(0,T]$, and write
\begin{equation*}
  \textstyle
  B(t)-B(0)=\lim_{k\to \infty} I_{k},
  \qquad
  I_{k}:=\Re\iint_{\Omega} \chi_{k}'(t)
  (\nabla u \cdot \overline{\nabla w}
  +|u|^{p-1}u\overline{w})dx dt.
\end{equation*}
Using \eqref{eq:distrib} with the choices $W=w$, $U=u$,
$F=|u+w|^{p-1}(u+w)-|u|^{p-1}u$ and
$G=|u|^{p-1}u$, we get
\begin{equation*}
  \textstyle
  I_{k}=\Re
  \iint_{\Omega}
  \chi_{k}(t)
  \left[
    u_{t}(|u+w|^{p-1}(\overline{u}+\overline{w})-
    |u|^{p-1}\overline{u})
    + \overline{w}_{t}|u|^{p-1}u-
    \partial_{t}(|u|^{p-1}u \overline{w})
  \right]
\end{equation*}
\begin{equation*}
  \textstyle
  =\Re\iint_{\Omega}\chi_{k}(t)
  \left[
    u_{t}(|u+w|^{p-1}(\overline{u}+\overline{w})-
    |u|^{p-1}\overline{u})-
    \overline{w}\partial_{t}(|u|^{p-1}u)
  \right]dxdt.
\end{equation*}
We compute
\begin{equation*}
  \textstyle
  \partial_{t}(|u|^{p-1}u)=
  |u|^{p-3}(|u|^{2}u_{t}+
  \frac{p-1}{2}u \overline{u}_{t}+
  \frac{p-1}{2}\overline{u} u_{t}
  )
\end{equation*}
so that
\begin{equation*}
  \textstyle
  \Re [\overline{w}\partial_{t}(|u|^{p-1}u)]=
  \Re(H u_{t}),
  \qquad
  H=
  \frac 12
  |u|^{p-3}(
  (p+1)|u|^{2}\overline{w}+
  (p-1)\overline{u}^{2}w)
\end{equation*}
and
\begin{equation*}
  \textstyle
  I_{k}=\Re\iint_{\Omega}\chi_{k}(t)
  \left[
    |u+w|^{p-1}(\overline{u}+\overline{w})-
    |u|^{p-1}\overline{u}-H
  \right]u_{t}
  dxdt.
\end{equation*}
We have
\begin{equation*}
\begin{split}
  |u+w&|^{p-1}(\overline{u}+\overline{w})-
  |u|^{p-1}\overline{u}-H
  \\
  =&
  (|u+w|^{p-1}-|u|^{p-1})(\overline{u}+\overline{w})-
  \textstyle
  \frac{p-1}{2}|u|^{p-3}(|u|^{2}w+\overline{u}^{2}w)
  \\
  =&
  \textstyle
  \frac{p-1}{2}
  \int_{0}^{1}
  (|u+\sigma w|^{p-3}-|u|^{p-3})(|u|^{2}\overline{w}
    +\overline{u}^{2}w)d \sigma+R
\end{split}
\end{equation*}
where
\begin{equation*}
  \textstyle
  R=
  \frac{p-1}{2}
  \int_{0}^{1}
  |u+\sigma w|^{p-3}
  [2 \sigma|w|^{2}(\overline{u}+\overline{w})+u \overline{w}^{2}
    + \overline{u}|w|^{2}]d \sigma.
\end{equation*}
We have easily (if $p\ge 4$)
\begin{equation*}
  |R|\le C(\|u\|_{L^{\infty}_{T}L^{\infty}}^{p-1})
  (|w|^{2}+|w|^{p}),
  \qquad
  ||u+\sigma w|^{p-3}-|u|^{p-3}|\le
  C(\|u\|_{L^{\infty}_{T}L^{\infty}}^{p-1})
  (|w|+|w|^{p-3})
\end{equation*}
and summing up
\begin{equation*}
  \textstyle
  |I_{k}|\le 
  C(\|u\|_{L^{\infty}_{T}L^{\infty}}^{p-1})
  \iint_{\Omega}\chi_{k}(|w|^{2}+|w|^{p+1}).
\end{equation*}
Letting $k\to \infty$ we deduce
\begin{equation*}
  \textstyle
  |B(t)-B(0)|\le 
  C\int_{0}^{t}[E(w(s))+\|w(s)\|_{L^{2}(\Omega)}^{2}]ds,
  \qquad
  C=C(\|u\|_{L^{\infty}_{T}L^{\infty}})
\end{equation*}
and using \eqref{eq:L2estb} we have
\begin{equation*}
  \textstyle
  B(0)-B(t)\le C\int_{0}^{t}E(w(s))ds
  +C\|w(0)\|_{L^{2}}^{2}.
\end{equation*}
Recalling \eqref{eq:finalg} and \eqref{eq:partial1}
we obtain
\begin{equation*}
  \textstyle
  E(w(t))\le
  C\int_{0}^{t}E(w(s))ds+CE(w(0))+C\|w(0)\|_{L^{2}}^{2}
\end{equation*}
with $C=C(\|u\|_{L^{\infty}_{T}L^{\infty}})$ as usual,
and by Gronwall's Lemma we conclude the proof.

\bibliographystyle{abbrv}

\end{document}